\definecolor{verylight}{gray}{0.97}
\definecolor{light}{gray}{0.93}
\definecolor{medium}{gray}{0.82}
 \def\NZQ{\Bbb}               
 \def\NN{{\NZQ N}}
 \def\frk{\frak}               
 \def\mm{{\frk m}}
 \def\G{{\mathcal G}}
 \def\H{{\mathcal H}}
 \def\opn#1#2{\def#1{\operatorname{#2}}} 
 \opn\chara{char} \opn\length{\ell} \opn\pd{pd} \opn\rk{rk}
 \opn\projdim{proj\,dim} \opn\injdim{inj\,dim} \opn\rank{rank}
 \opn\depth{depth} \opn\grade{grade} \opn\height{height}
 \opn\embdim{emb\,dim} \opn\codim{codim}
 \opn\Tr{Tr} \opn\bigrank{big\,rank}
 \opn\superheight{superheight}\opn\lcm{lcm}
 \opn\trdeg{tr\,deg}
 \opn\reg{reg} \opn\lreg{lreg} \opn\ini{in} \opn\lpd{lpd}
 \opn\size{size} \opn\sdepth{sdepth}
 \opn\link{link}\opn\fdepth{fdepth}\opn\lex{lex}
 \opn\div{div} \opn\Div{Div} \opn\cl{cl} \opn\Cl{Cl}
 \opn\Spec{Spec} \opn\Supp{Supp} \opn\supp{supp} \opn\Sing{Sing}
 \opn\Ass{Ass} \opn\Min{Min}\opn\Mon{Mon}
 \opn\Ann{Ann} \opn\Rad{Rad} \opn\Soc{Soc}
 \opn\Im{Im} \opn\Ker{Ker} \opn\Coker{Coker} \opn\Am{Am}
 \opn\Hom{Hom} \opn\Tor{Tor} \opn\Ext{Ext} \opn\End{End}
 \opn\Aut{Aut} \opn\id{id}
 \opn\nat{nat}
 \opn\pff{pf}
 \opn\Pf{Pf} \opn\GL{GL} \opn\SL{SL} \opn\mod{mod} \opn\ord{ord}
 \opn\Gin{Gin} \opn\Hilb{Hilb}\opn\sort{sort}
 \opn\aff{aff} \opn
\opn\relint{relint} \opn\st{st}
 \opn\lk{lk} \opn\cn{cn} \opn\core{core} \opn\vol{vol}
 \opn\link{link} \opn\star{star}\opn\lex{lex}\opn\set{set}
 \opn\Ker{Ker} \opn\HS{HS} \opn\G{G}
 \opn\gr{gr}
 \def\pot#1#2{#1[\kern-0.28ex[#2]\kern-0.28ex]}
 \opn\dirlim{\underrightarrow{\lim}}
 \opn\inivlim{\underleftarrow{\lim}}
 \def\Implies{\ifmmode\Longrightarrow \else
         \unskip${}\Longrightarrow{}$\ignorespaces\fi}
 \def\implies{\ifmmode\Rightarrow \else
         \unskip${}\Rightarrow{}$\ignorespaces\fi}
 \def\iff{\ifmmode\Longleftrightarrow \else
         \unskip${}\Longleftrightarrow{}$\ignorespaces\fi}
 \newtheorem{Theorem}{Theorem}[section]
 \newtheorem{Corollary}[Theorem]{Corollary}
 \newtheorem{Proposition}[Theorem]{Proposition}
 \newtheorem{Remark}[Theorem]{Remark}
 \newtheorem{Example}[Theorem]{Example}
 \newtheorem{Definition}[Theorem]{Definition}
 \let\epsilon\varepsilon
 \let\kappa=\varkappa
 \def\qed{\ifhmode\textqed\fi
       \ifmmode\ifinner\quad\qedsymbol\else\dispqed\fi\fi}
 \def\textqed{\unskip\nobreak\penalty50
        \hskip2em\hbox{}\nobreak\hfil\qedsymbol
        \parfillskip=0pt \finalhyphendemerits=0}
 \def\dispqed{\rlap{\qquad\qedsymbol}}
 \opn\dis{dis}
 \def\pnt{{\raise0.5mm\hbox{\large\bf.}}}
 \opn\Lex{Lex}
\begin{document}
\justifying
 \title {Componentwise linear ideals and exchange properties}

\author{ Ayesha Asloob Qureshi}
\address{Sabanci University, Faculty of Engineering and Natural Sciences, Orta Mahalle, Tuzla 34956, Istanbul,
Turkey} \email{ayesha.asloob@sabanciuniv.edu and aqureshi@sabanciuniv.edu}

\author{Somayeh Bandari}
\address{Department of Mathematics, Buein Zahra Technical University, Buein Zahra, Qazvin, Iran}
\email{somayeh.bandari@yahoo.com and s.bandari@bzte.ac.ir}

\subjclass{13C13, 05E40}
 \keywords{Non-pure dual exchange property, weakly polymatroidal ideals, ideals of $k$-covers of hypergraphs, componentwise linear ideals}
\thanks{Ayesha Asloob Qureshi is supported by Scientific and Technological Research Council
of Turkey T\"UB\.{I}TAK under the Grant No: 122F128, and is thankful to T\"UB\.{I}TAK for their supports}

 \begin{abstract}
We prove the componentwise linearity of ideals that satisfy a certain exchange property similar to polymatroidal ideals. We also discuss the componentwise linearity and exchange properties of ideals of $k$-covers of totally balanced weighted hypergraphs. 
 \end{abstract}

 \maketitle

  \section*{Introduction}
A graded ideal $I \subset S=K[x_1, \ldots, x_n]$ is
called {\em componentwise linear} if $I_{\langle d \rangle}$ admits
a linear resolution for all $d$, where $I_{\langle d \rangle}$ denotes the ideal generated by all homogeneous elements of degree
$d$ of $I$. Componentwise linear ideals were introduced by Herzog and Hibi in
\cite{HH1}, and they appear
naturally in combinatorial and algebraic geometrical contexts; see
\cite{FT} for insight into such examples. In \cite{HH1}, it is shown
that the Stanley-Reisner ideal $I_\Delta$ of a simplicial complex
$\Delta$ is componentwise linear if and only if the Alexander dual
of $\Delta$ is sequentially Cohen-Macaulay. This result
combinatorially characterizes squarefree componentwise linear
ideals. A useful tool to prove componentwise linearity of an ideal
is the notion of linear quotients. Ideals with linear
quotients were introduced by Herzog and Takayama in \cite{HT}. In
\cite[Theorem 2.7]{JZ}, it is proved that if a monomial ideal $I$
has linear quotients, then it is componentwise linear. This result
is one of  the most effective tools while studying the componentwise
linearity of monomial ideals, and it has been applied in several
articles in this context; for example, \cite{BH}, \cite{BQ},
\cite{FT}, \cite{MM}.

In this paper, we discuss componentwise linearity of monomial ideals
whose minimal monomial generators satisfy certain exchange
relations. The ideals with the non-pure dual exchange property (see
Definition~\ref{def:dual}) were introduced in the previous work \cite{BQ} by the
authors, to discuss the linear quotients property of
componentwise polymatroidal ideal. A monomial ideal $I$ is called
{\em polymatroidal} if the set of exponent vectors of the minimal
monomial generators of $I$ is the set of bases of a discrete
polymatroid; see \cite{HHdis} for more information. Given that the set of bases of a discrete polymatroid is distinguished by the ``exchange property",
 it follows that a polymatroidal ideal can also be characterized accordingly. Let $I \subset S=K[x_1, \ldots, x_n]$ be a monomial
ideal generated in a single degree. A monomial ideal $I$ is called {\em polymatroidal} if its monomial generators enjoy the following exchange property: for any two
minimal monomial generators $u, v \in I$, if $\deg_{x_i} (u)>
\deg_{x_i}(v)$, then  there exists some $j$ with $\deg_{x_j}
(u)<\deg_{x_j}(v)$ such that $x_j(u/x_i) \in I$. In other words,
after exchanging $x_i$ with $x_j$ in $u$, we again obtain an element
of $I$. We will refer to such properties of minimal monomial
generators of a monomial ideal as {\em exchange properties}.
Polymatroidal ideals have been studied by many authors and hold a
special place among monomial ideals due to their well-tamed
algebraic and homological behavior, which is mainly due to the exchange property satisfied by their generators. For example, it is known that the product of polymatroidal ideals is again polymatroidal, polymatroidal ideals have linear quotients and hence all powers of a polymatroidal ideal have a linear resolution, see \cite{CH}.  In \cite{BH}, authors formulated the definition of {\em componentwise polymatroidal} ideals to introduce a class of monomial ideals that are
not necessarily generated in a single degree and share some of the
nice properties that polymatroidal ideals enjoy. A monomial ideal
$I$ is called componentwise polymatroidal if $I_{\langle d \rangle}$
is polymatroidal for all $d>0$. It directly follows from the definition of
componentwise polymatroidal ideal that they are componentwise linear. 
It is natural to ask that if minimal monomial generators of a
componentwise polymatroidal ideal also satisfy some exchange
property similar to that of polymatroidal ideals.  
In \cite[Proposition 1.2]{BQ}, the authors answered this question
affirmatively, and in \cite[Proposition 1.5]{BQ}, they showed that
these ideals satisfy the non-pure dual exchange property. Recently, Ficarra
in \cite{F} proved a conjecture of Bandari and Herzog stating that
``all componentwise polymatroidal ideal have linear quotients", and
in his proof, it was observed that the non-pure dual exchange
property of componentwise polymatroidal ideals plays a vital role.
Our aim in this work is to find new classes of componentwise linear monomial ideals 
whose minimal monomial generators satisfy certain exchange property, particularly 
the non-pure dual exchange property.

The breakdown of this paper is as follows: In Section~\ref{sec1}, we
prove that ideals with the non-pure dual exchange property are
componentwise linear by showing that they admit linear quotients. Our
proof follows the same ordering of generators as in work of Ficarra
in \cite{F}. Unlike polymatroidal ideals, we observe in Example~\ref{ex:produt-dual}
that powers of ideals with the non-pure dual exchange property do not necessarily belong to the same class. However, in Theorem~\ref{thm:linquotient}, we show that if $I$ has the non-pure dual exchange property, then so does $\mm I $, where
$\mm$ is the unique graded maximal ideal of $S$.  In Section~\ref{sec2}, we
discuss different classes of ideals of $k$-covers $I_k(\mathcal{H}, \omega)$ of weighted hypergraphs $(\mathcal{H}, \omega)$, also known as intersection of Veronese ideals, see \cite{FT}. Here $\H$ is a hypergraph equipped with a weight function $\omega$; see Section~\ref{sec2} for a formal definition. Mohammadi and Moradi in \cite{MM}, and Francisco and Tyul in \cite{FT}, studied the componentwise linearity of $I_k(\mathcal{H}, \omega)$. In \cite{FT}, for different classes of $\H$, the authors showed that $I_k(\mathcal{H}, \omega)$ either has linear quotients or it is componentwise polymatroidal. On the other hand, in \cite{MM}, authors showed that for different classes of $\H$, the ideal $I_k(\mathcal{H}, \omega)$  is either weakly polymatyroidal or componentwise weakly poymatroidal. The property of an ideal being componentwise polymatroidal depends on its ambient ring, as noted in \cite[Remark 3.3]{FT}, whereas the property of being weakly polymatroidal depends on the ordering of the variables. The definition of the non-pure dual exchange property is independent of these restrictions. In Section~\ref{sec2}, our aim is to extend work done in \cite{FT} and \cite{MM}. We investigate under what conditions on the edges of $\mathcal{H}$ the ideal of $k$-covers of $(\H,\omega)$ satisfies the non-pure dual exchange property. We particularly focus on totally balanced hypergraphs, also known as simplicial forest defined by Faridi in \cite{Far2}. We prove the non-pure dual exchange property for the ideal of $k$-covers of weighted totally balanced hypergraph $(\H, \omega)$ on vertex set $[n]=\{1, \ldots, n\}$ and edges 
\begin{enumerate}
	\item (Theorem~\ref{thm:intersection}) $J_1,\ldots, J_s, K$  such that $J_i\cap
	J_j=\cap_{t=1}^s J_t$ for all $1\leq i<j\leq s$ and $K\cap
	J_t=\emptyset$ for all $t=1, \ldots, s$,
	
	\item  (Theorem~\ref{thm:4ideals}) $J_1,J_2,J_3, J_4$ such that $J_1 \cap J_3=J_1\cap J_4 = J_2\cap
	J_4=\emptyset$, and 	$J_2 \subseteq J_1 \cup J_3$ and $J_3 \subseteq J_2 \cup J_4$. In this case we let $\omega(J_i)=1$, for all $i$.
\end{enumerate}
We also prove that the ideal of $k$-covers of weighted totally balanced hypergraph with edges  $J_1,J_2, J_3\subset [n]$ such that $J_1 \cap J_3 = \emptyset$ is weakly polymatroidal if $\omega(J_1) \geq \omega(J_2) \geq \omega(J_3)$; see Theorem~\ref{thm:3weakly}. We also proivde examples to justify the conditions on the edges of these hypergraphs. 

\section{Ideals with the non-pure dual exchange property}\label{sec1}
The ideals with the non-pure dual exchange property were defined in \cite{BQ} to facilitate the study of componentwise polymatroidal ideals. In this section, our main goal is to show that the ideals with the non-pure dual exchange property are componentwise linear ideals. To this end, we first recall some required definitions and notions. Let $S=K[x_1, \ldots, x_n]$ be a polynomial ring over a field $K$. For any monomial ideal  $I \subset S$, we denote the unique minimal generating set of $I$ by $G(I)$. 

\begin{Definition} \label{def:dual} A monomial ideal $I$ is said to satisfy the {\em non-pure
dual exchange property} if, for all $u, v\in G(I)$ with $\deg(u)\leq
\deg(v)$ and for all $i$ such that $\deg_{x_i}(v) < \deg_{x_i}(u)$,
there exists $j$ such that $\deg_{x_j}(v) > \deg_{x_j}(u)$ and
$x_i(v/x_j)\in I$.
\end{Definition}

By virtue of \cite[Theorem 2.7]{JZ}, to show that an ideal with the non-pure dual exchange property is componentwise linear, it is enough to show that it admits linear quotients. A monomial ideal $I\subset S$ is said to have {\em linear quotients} if there exists an ordering $u_1,\ldots,u_m$ of its
minimal generators such that, for each $i=2,\ldots,m$, the colon
ideal $(u_1,\ldots,u_{i-1}):u_i$ is generated by a subset of the
variables. Any such ordering of $G(I)$ is called an {\em admissible order}. Below we show that ideals with the non-pure dual exchange property admit linear quotients. The admissible order in the proof below is similar to the one introduced by Ficarra in \cite{F} to prove the componentwise linearity of componentwise polymatroidal ideals. 

\begin{Theorem}\label{thm:linquotient}
Let $I$ be an ideal with the non-pure dual exchange property. Then $I$
has linear quotients.
\end{Theorem}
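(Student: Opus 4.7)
My plan is to exhibit an admissible order on $G(I)$ from which linear quotients, and hence componentwise linearity via \cite[Theorem 2.7]{JZ}, will follow. Following Ficarra~\cite{F}, I would order $G(I) = \{u_1, \ldots, u_m\}$ first by total degree (smallest first), and within each degree class by a suitable refinement of pure lexicographic order with respect to $x_1 > x_2 > \cdots > x_n$. The task then reduces to showing that for every $\ell < k$, the colon ideal $(u_1, \ldots, u_{k-1}) : u_k$ contains a variable dividing $u_\ell : u_k$.

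Fix $\ell < k$. Suppose first that $\deg u_\ell < \deg u_k$. Since $u_\ell$ and $u_k$ are distinct minimal generators, $u_\ell \nmid u_k$, so there exists $x_s$ with $\deg_{x_s}(u_\ell) > \deg_{x_s}(u_k)$. Applying the non-pure dual exchange property to $(u_\ell, u_k)$ at $x_s$ yields an index $t$ with $\deg_{x_t}(u_k) > \deg_{x_t}(u_\ell)$ and $x_s u_k / x_t \in I$. Picking $u_p \in G(I)$ dividing $x_s u_k / x_t$, a direct monomial check (using $u_p \mid x_s u_k$ together with $u_p \neq u_k$, which holds because $x_t \nmid x_s$) gives $u_p : u_k = x_s$. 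If $\deg u_p < \deg u_k$, then $u_p$ precedes $u_k$ by the degree ordering and we are done. If instead $\deg u_\ell = \deg u_k$, then restricted to a single degree the non-pure dual exchange coincides with the polymatroidal exchange (by symmetric base exchange for discrete polymatroids), so $I_{\langle \deg u_k \rangle}$ carries a polymatroidal structure and the classical linear quotients argument in pure lex order applies.

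The main obstacle is the remaining sub-case: $\deg u_\ell < \deg u_k$ but $u_p = x_s u_k / x_t$ has the same degree as $u_k$. Here we must certify that $u_p$ precedes $u_k$ in the within-degree order. I expect to handle this by (i) choosing $x_s$ strategically (for example, as the smallest index in the support of $u_\ell : u_k$) and (ii) if needed, iterating the non-pure dual exchange applied to the pair $(u_\ell, u_p)$, producing candidates $u_p^{(0)} = u_p, u_p^{(1)}, \ldots$ inside the single-degree piece $I_{\langle \deg u_k \rangle}$. Minimality of each iterate in $G(I)$ prevents $u_\ell$ from dividing $u_p^{(i)}$, so the exchange support remains nonempty; a monotone potential such as $\deg(u_\ell : u_p^{(i)})$ strictly decreases, and the process eventually produces either a strictly smaller-degree divisor (concluded by the degree rule) or a generator placed before $u_k$ in the within-degree order. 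This iterative bookkeeping inside the single-degree piece is, as in \cite{F}, the technical heart of the argument.
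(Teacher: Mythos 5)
Your approach is genuinely different from the paper's. You propose to construct the admissible order directly (by total degree, then a refined lex within each degree) and to verify the linear-quotients criterion pair by pair. The paper instead proceeds by induction on $|G(I)|$: after dividing out a common factor, it picks a variable $x_1$ dividing some generator of minimal degree, writes $I = x_1 I_1 + I_2$ with $x_1 \nmid u$ for all $u \in G(I_2)$, proves the inclusion $I_2 \subseteq I_1$ and that both $x_1 I_1$ and $I_2$ inherit the non-pure dual exchange property, and then concatenates the inductively obtained admissible orders. The whole cross-degree bookkeeping you are wrestling with is absorbed in that recursion via the single clean computation $(x_1 I_1):v_l = (x_1)$ for $v_l \in G(I_2)$ (which uses $I_2 \subseteq I_1$ and $x_1 \nmid v_l$). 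So the two proofs ultimately produce a very similar ordering, but the paper's packaging avoids ever having to reason about an arbitrary pair $u_\ell, u_k$ of different degrees at once.

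The gap you flag in your own proposal is real and is not repaired by the sketch you give. In the cross-degree case with $\deg u_p = \deg u_k$, you need $u_p = x_s u_k/x_j$ to precede $u_k$, i.e.\ $s < j$; but unlike the equal-degree case (where choosing $s$ as the first index with $\deg_{x_s}(u_\ell) > \deg_{x_s}(u_k)$ forces $j > s$ because the two monomials agree in all earlier coordinates), here $u_k$ may already ``win'' at indices smaller than $s$, so no choice of $s$ guarantees $j > s$. The iteration you then propose on $(u_\ell, u_p^{(i)})$ has a more serious defect than mere incompleteness: your potential $\deg(u_\ell : u_p^{(i)})$ does decrease when $u_p^{(i+1)}$ has the same degree, but what is actually needed for the linear-quotients criterion is that $u_p^{(i)} : u_k$ remain a single variable dividing $u_\ell : u_k$, and the iteration does not control this -- already $u_p^{(1)}$ divides $x_{s'}x_s u_k/(x_{t'}x_t)$, so $u_p^{(1)}:u_k$ can be a product of two variables or a variable outside $\supp(u_\ell:u_k)$. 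Finally, the assertion that ``$I_{\langle \deg u_k\rangle}$ carries a polymatroidal structure'' is not justified and is not actually what you use; the generator $x_i u_k/x_j$ produced by the exchange might be a multiple of a lower-degree element of $G(I)$, in which case it is not a minimal generator of the same-degree piece at all (this case is in fact the favourable one, but it does mean the polymatroidal framing is the wrong explanation). You would need to close the cross-degree case by a different device -- the paper's $I = x_1I_1 + I_2$ decomposition is one such device.
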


\begin{proof}
 We prove the theorem by induction on $|G(I)|$. If $|G(I)|=1$,  then $I$ is a principal ideal and it has linear quotients.
 Now, let $|G(I)|>1$. We may assume  that all monomials $u\in G(I)$ do not have  common factor $w\neq 1$. Otherwise, we may consider
the ideal $I'$ with $G(I')=\{u/w \;|\; u\in G(I)\}$. Then $I'$ also has the
non-pure exchange property, and $I$ has linear quotients if and
only if $I'$ has linear quotients. Let $d=\min \{\deg(u) : u \in
G(I)\}$. After a suitable relabeling, we may assume that $x_1\in
\supp(I_{\langle d \rangle })$. Therefore, we have  $I=x_1I_1+I_2$,
where $x_1\nmid u$ for $u\in G(I_2)$. Note that $G(I)=G(x_1 I_1)
\cup G(I_2)$. We claim that $I_2 \subseteq I_1$ and $x_1I_1$ and
$I_2$ are ideals with the non-pure dual exchange property.

Proof of $I_2 \subseteq I_1$: Let $v\in G(I_2)$ and let $u\in
x_1I_1$ with $\deg(u)=d$. Therefore, $\deg(u)\leq\deg(v)$. Moreover,
$\deg_{x_1}(v)=0 <\deg_{x_1}(u)$. By the definition of the non-pure dual
exchange property, there exists an index $j$ such that
$\deg_{x_j}(v)>\deg_{x_j}(u)$ and $x_1(v/x_j)\in I$. Thus, there is
$w\in G(I)$ such that $w|x_1(v/x_j)$. If $w\in G(I_2)$, then
$x_1\nmid w$, so $w$ divides $v/x_j$, which contradicts $v\in G(I)$. Hence $w \in G(x_1I_1)$. So $w=x_1w'$ for $w'\in
I_1$. Therefore, $w'$ divides $v/x_j$. Hence, $w'|v$, as desired.

To see $x_1 I_1 $ has the non-pure dual exchange property, take $u,v \in
G(x_1I_1) \subset G(I)$ with $\deg(u)\leq \deg(v)$ and let $i$ be such
that $\deg_{x_i}(v) < \deg_{x_i}(u)$. Since $I$ has the non-pure dual
exchange property and $u,v \in G(I)$, there  exists $j$ such that
$\deg_{x_j}(v) > \deg_{x_j}(u)$ and $w=x_i(v/x_j)\in I$.  If $j\neq
1$, then $x_1$ divides $w$ because $x_1$ divide $v$. If $j=1$, then
$\deg_{x_1}(v) > \deg_{x_1}(u)\geq 1$, which means that $x_1$ divides $w$.
Since $w \in I$, there exists some $w' \in G(I)= G(x_1I_1) \cup
G(I_2)$ such that $w'$ divides $w$. If $w' \in G(x_1I_1)$, then $w
\in x_1I_1$, as required. If  $w'\in G(I_2)$, then $x_1$ does not
divide $w'$. Using the fact that $w'$ and $x_1$ divide $w$ and that $I_2 \subseteq
I_1$, we obtain $w \in x_1 I_2 \subseteq x_1 I_1$, as required.

Now, we want to show that $I_2$ has the non-pure dual exchange property.
Let $u, v\in G(I_2)\subset G(I)$ with $\deg(u)\leq \deg(v)$ and let $i$
be such that $\deg_{x_i}(v) < \deg_{x_i}(u)$. Then there exists $j$
such that $\deg_{x_j}(v) > \deg_{x_j}(u)$ and $x_i(v/x_j)\in I$.
Since $x_1$ does not divide $u$ and $v$, it follows that $x_1$ does not
divide $x_i(v/x_j)$. This shows that $x_i(v/x_j)\in I_2$.

Now, since $|G(x_1I_1)|$ and $|G(I_2)|$ are strictly less than
$|G(I)|$, and $x_1I_1$ and $I_2$ are monomial ideals with the non-pure
dual exchange property, it follows by our induction hypothesis that
$x_1I_1$ and $I_2$ have linear quotients. Let $x_1I_1$ has linear
quotients with the admissible order $u_1,\ldots,u_r$ and let $I_2$ has linear
quotients with the admissible order $v_1,\ldots,v_s$. We claim that $I$
has linear quotients with the admissible order
$u_1,\ldots,u_r,v_1,\ldots,v_s$. For each $l=2,\ldots,r$, the ideal
$(u_1,\ldots,x_{l-1}):u_l$ is generated by variables. Now, let
$l\in\{2,\ldots,s\}$. Then
\begin{equation*}
\begin{split}
(u_1,\ldots,u_r,v_1,\ldots,v_{l-1}):v_l&=(u_1,\ldots,u_r):v_l+(v_1,\ldots,v_{l-1}):v_l\\
&=(x_1I_1:v_l)+(v_1,\ldots,v_{l-1}):v_l\\
&=(x_1)+(v_1,\ldots,v_{l-1}):v_l,
\end{split}
\end{equation*}
is generated by variables. Note that since $v_l\in I_2\subseteq I_1$
and $x_1\nmid v_l$, we have that $(x_1I_1:v_l)=(x_1)$.
\end{proof}

Polymatroidal ideals are distinguished by the fact that the product of polymatroidal ideals remains polymatroidal. However, in the following example, we observe that the powers of ideals with the non-pure dual exchange property need not satisfy the non-pure dual exchange property.

\begin{Example}\label{ex:produt-dual}
	Let $I=(x_1^2,x_1x_2^2,x_1x_2x_3,x_2^2x_3,x_1x_3^3,x_2x_3^3)$.  It can be easily verified that $I$ has the non-pure dual exchange property. However, $I^2$ does not	have the non-pure dual exchange property. To see this, take
	$u=x_1^3x_3^3\in G(I^2)$ and $v=x_2^4x_3^2\in G(I^2)$.  Then
	$\deg_{x_3}(v)<\deg_{x_3}(u)$ and $x_3(v/x_2) =x_2^3x_3^3\not \in I^2$.
\end{Example}

On the other hand, ideals with the non-pure dual exchange property preserved their characteristic when multiplied with the unique graded maximal ideal of $S$, as shown in the following result. 

\begin{Proposition} \label{pro}
Let $I\subset K[x_1, \ldots, x_n]$ be an ideal with the non-pure dual exchange property, and $\mm$ be the unique graded maximal ideal of $S$. Then $\mm
I$ also has the non-pure dual exchange property.
\end{Proposition}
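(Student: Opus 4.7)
The plan is to verify the non-pure dual exchange property of $\mm I$ by a direct argument that lifts the corresponding property of $I$, exploiting the flexibility in how elements of $G(\mm I)$ decompose. To set up this flexibility I would first observe that for every $w' \in G(\mm I)$, any valid decomposition $w' = x_k w$ with $w \in I$ automatically has $w \in G(I)$, because otherwise a proper divisor of $w'$ would lie in $\mm I$, contradicting the minimality of $w'$. Consequently the sets $A(u') = \{a \in [n] : u'/x_a \in I\}$ and $B(v') = \{b \in [n] : v'/x_b \in I\}$ are nonempty, and each of their elements yields a decomposition $u' = x_a u$, $v' = x_b v$ with $u, v \in G(I)$.

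Given $u', v' \in G(\mm I)$ with $\deg(u') \leq \deg(v')$ and $\deg_{x_i}(v') < \deg_{x_i}(u')$, I would choose $a$ and $b$ so as to arrange $\deg_{x_i}(v) < \deg_{x_i}(u)$ for the underlying $u, v \in G(I)$; this is achieved by preferring $a = i$ when $i \in A(u')$, and $b = i$ when $i \in B(v')$. Once this is arranged, the non-pure dual exchange property of $I$ applied at $i$ to $u, v$ produces some $j \neq i$ with $\deg_{x_j}(v) > \deg_{x_j}(u)$ and $x_i(v/x_j) \in I$; a direct computation then verifies $\deg_{x_j}(v') > \deg_{x_j}(u')$ and $x_i(v'/x_j) \in \mm I$, provided $j$ avoids a particular conflict with $a$ and $b$. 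The auxiliary case $\deg_{x_i}(v) = \deg_{x_i}(u)$ (which arises when $a = i \neq b$ and $\deg_{x_i}(u') = \deg_{x_i}(v') + 1$) is handled by either taking $j = b$ directly or applying the non-pure dual exchange property of $I$ a second time at index $b$.

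The main obstacle is the subcase in which the non-pure dual exchange property of $I$ at $i$ only produces an index $j = a$ with $\deg_{x_a}(v) = \deg_{x_a}(u) + 1$; here, when $a \neq b$, one has $\deg_{x_j}(v') = \deg_{x_j}(u')$, which fails the strict inequality required for $\mm I$. My plan to bypass this is to leverage the relation $x_i(v/x_a) \in I$: pick $w \in G(I)$ with $w \mid x_i(v/x_a)$, so that $x_a w \mid x_a \cdot x_i(v/x_a) = x_i v$. When $b = i$, this gives $x_a w \mid v'$, and minimality of $v'$ in $\mm I$ forces $x_a w = v'$, whence $v'/x_a \in I$; I then re-decompose $v' = x_a v_{\text{new}}$ with $b = a$, making $a = b$ so that any $j$ produced by the non-pure dual exchange of $I$ at $i$ on $u$ and $v_{\text{new}}$ works (the $\delta_{j,a}$-terms cancel in the degree comparison). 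When $b \neq i$, I refine by comparing $\deg_{x_b}(v)$ with $\deg_{x_b}(u)$: if $\deg_{x_b}(v) \geq \deg_{x_b}(u)$, then $j = b$ works via $x_i v \in x_i I \subseteq \mm I$; otherwise, a further application of the non-pure dual exchange of $I$ at $b$ yields an index $j'$, and either $j' \neq a$ (so $j'$ is the desired index) or $j' = a$, which gives $x_b(v/x_a) \in I$, i.e., $v'/x_a \in I$, reducing once more to the re-decomposition $b = a$.
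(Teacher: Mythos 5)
Your proposal is correct and follows essentially the same route as the paper's proof: decompose each generator of $\mm I$ as a variable times a minimal generator of $I$, apply the non-pure dual exchange property of $I$ once (or a second time at the problematic index), and use minimality of the generator of $\mm I$ to conclude that $v'/x_a\in G(I)$ and re-decompose when the exchange index collides with the adjoined variable. The paper organizes the cases by comparing $i$ with the adjoined indices $k$ and $t$ rather than by choosing the decomposition adaptively, but the underlying mechanism is identical.
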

\begin{proof}
Let $x_ku,x_tv\in G(\mm I)$ such that $u,v \in G(I)$, and $\deg
(x_ku) \leq \deg (x_tv)$. We need to show that for each $i$ with
$\deg_{x_i}(x_tv)< \deg_{x_i} (x_ku)$, there exists $j$ such that
$\deg_{x_j}(x_tv)> \deg_{x_j} (x_ku)$ and $x_i(x_tv/x_j) \in \mm I$.
The inequality $\deg (x_ku) \leq \deg (x_tv)$ leads to $\deg (u)
\leq \deg (v)$.

First, suppose that   $\deg_{x_t}(x_tv)< \deg_{x_t} (x_ku)$. Since
$\deg_{x_t}(v)=\deg_{x_t}(x_tv)-1$, we obtain $\deg_{x_t}(v)<
\deg_{x_t} (u)$. Hence, there exists $j$ such that $\deg_{x_j}(v)>
\deg_{x_j} (u)$ and $v_1=x_t(v/x_j) \in  I$. If $\deg_{x_j}(x_tv)>
\deg_{x_j} (x_ku)$ then $x_tv_1=x_t(x_tv/x_j) \in \mm I$, as
required. Otherwise, if $\deg_{x_j}(x_tv) = \deg_{x_j} (x_ku)$, then $j=k$ and $j\neq t$. Also, $\deg_{x_j}
(v_1)=\deg_{x_j}(v)-1=\deg_{x_j}u$. Observe that $v_1 \in  G(I)$, otherwise, there exists $w \in G(I)$ with $\deg (w) < \deg(v_1)$
such that $w$ divides $v_1$, then $x_j w$ divides $x_t v$ in
$\mm I$, contradicting the assumption that $x_tv
\in G(\mm I )$. Furthermore, we also have $\deg_{x_t}(u)- \deg_{x_t}
(v) \geq 2$ because $\deg_{x_t}(x_tv)< \deg_{x_t} (x_ju)$. Hence
$\deg_{x_t} (v_1) < \deg_{x_t} (u)$. Then there exists some
$x_{\ell}$ such that $\deg_{x_{\ell}} (v_1) > \deg_{x_{\ell}}( u)$
and $x_t(v_1/x_{\ell}) \in I$. Since $l \neq j$ and $l \neq t$, we
conclude that $\deg_{x_{\ell}} (x_tv)=\deg_{x_{\ell}}
(v)=\deg_{x_{\ell}} (v_1)> \deg_{x_{\ell}}( x_ju)$. Furthermore,
$x_t(v_1/x_{\ell})= x_t(x_t v/x_jx_{\ell})\in I$ gives $x_j
(x_t(v_1/x_{\ell}))=x_t(x_t v/x_{\ell})\in \mm I$, as required.

Now let $\deg_{x_i}(x_tv)< \deg_{x_i} (x_ku)$, for some $i \neq t$.
Then $\deg_{x_i}(v)=\deg_{x_i}(x_tv)$.
We have following two cases to consider.\\

{\bf Case 1:} Let $i \neq k$. Then $\deg_{x_i} (u) = \deg_{x_i}
(x_ku)$, and $\deg_{x_i}(v)< \deg_{x_i} (u)$. There exists $x_j$
such that  $\deg_{x_j}(v)>\deg_{x_j} (u)$ and $x_i(v/x_j)\in I$. If
$\deg_{x_j}(x_tv)> \deg_{x_j} (x_ku)$, then $x_i(x_tv/x_j) \in \mm
I$, as required. Otherwise, $\deg_{x_j}(x_tv)= \deg_{x_j} (x_ku)$,
which is the case if and only if $\deg_{x_j}(v)-1=\deg_{x_j} (u)$,
$j=k$ and $j \neq t$. In this case, we compare $\deg_{x_t}(x_tv)$
and $ \deg_{x_t} (x_ju)$. If $\deg_{x_t}(x_tv)> \deg_{x_t} (x_ju)$,
then $x_i(x_tv/x_t) \in \mm I$, and we are done.  If
$\deg_{x_t}(x_tv)\leq \deg_{x_t} (x_ju)$,
 then $\deg_{x_t}(v)< \deg_{x_t} (u)$, because $j \neq t$. We again argue as before.
 There exists some $p$ such that $\deg_{x_p} v > \deg_{x_p} u$ and $x_t(v/x_p)\in I$.
 If $\deg_{x_p}(x_tv)> \deg_{x_p} (x_ju)$, then $x_i(x_tv/x_p)\in \mm I$, as required.
 Otherwise $p=j$ and $v_2=x_t(v/x_j)\in G(I)$ because $x_t v\in G(\mm I)$.
 Note that $\deg_{x_i}v_2=\deg_{x_i}v<\deg_{x_i}u$ and $\deg_{x_j}v_2=\deg_{x_j}(v)-1=\deg_{x_j}(u)$.
 Therefore, there exists some $l\neq j$ with $\deg_{x_l}( v_2) > \deg_{x_l}( u)$ such that $x_i(v_2/x_l) \in I$.
 Moreover, $\deg_{x_l} (x_tv)=\deg_{x_l}(v_2)> \deg_{x_l}(u)=\deg_{x_l} (x_j u)$.
 This shows that $x_i(v_2/x_l) =x_i x_t(v/x_jx_l) \in I$, and consequently $x_i x_t(v/x_l) \in \mm I$.  \\

{\bf Case 2:} Let $i=k$. Then  $\deg_{x_i} (x_ku)= \deg_{x_i} (x_iu)= \deg_{x_i} u+1$. This leads to further two subcases:\\
{\bf Case 2.1:} Let $\deg_{x_i}(v)=  \deg_{x_i} (u)$. If
$\deg_{x_t}(v)<  \deg_{x_t} (u)$,
 then there exists some $j$ such that $\deg_{x_j}(v)>  \deg_{x_j} (u)$ and $x_t(v/x_j)\in I$.
 Since $j \neq i$ and $j \neq t$, we have $\deg_{x_j}(x_tv)=\deg_{x_j}(v)> \deg_{x_j} (u)=\deg_{x_j}(x_iu)$
 and $x_ix_t(v/x_j)\in \mm I$ as required. Otherwise, $\deg_{x_t}(v)\geq  \deg_{x_t} (u)$,
 which gives $\deg_{x_t}(x_t v)>  \deg_{x_t} (x_iu)$, and then $x_i(x_tv/x_t) \in \mm I$.

{\bf Case 2.2:} Let $\deg_{x_i}(v)<  \deg_{x_i} (u)$. Then there
exists some $j$ such that $\deg_{x_j}(v)>  \deg_{x_j} (u)$ and
$x_i(v/x_j)\in I$. Since $j \neq i$,
 we have $\deg_{x_j} (x_iu)= \deg_{x_j} (u)$, therefore, $\deg_{x_j}(x_tv)\ge \deg_{x_j}(v)>\deg_{x_j}(u)= \deg_{x_j} (x_iu)$,
 then $x_i(x_tv/x_j) \in \mm I$, as required.
\end{proof}


\section{Componentwise linearity and exchange property of ideals of  $k$-covers of hypergraphs}\label{sec2}
A finite {\it hypergraph} $\H$ on the vertex set $V({\mathcal{H}})$ is a collection of edges  $E({\H})=\{ J_1, \ldots, J_m\}$ with $J_i \subseteq V({\H})$ and $J_i \neq \emptyset$ for all $i=1, \ldots,m$. Throughout the following text, we let $V(\mathcal{H})=[n]$. A weighted hypergraph is a hypergraph $\mathcal{H}$ together with an integer valued weight function $\omega: E(\mathcal{H})\rightarrow \NN$. For any $k \in \NN$, a {\em $k$-cover} of a weighted hypergraph $(\mathcal{H}, \omega)$ is a vector $c=(c_1 , \ldots, c_n) \in \NN^n$ that satisfies the condition $\sum_{i \in J} c_i \geq k \omega(J)$ for all $J \in E(\H)$. For every edge $J \in E(\H)$, let $\mm_J$ be the ideal generated by the variables $x_i$ with $i \in J$. Following \cite{HHT}, we set $I(\H, \omega):= \cap_{J \in E(\H)}\mm_J^{\omega(J)}$. The ideal $I_k(\mathcal{H}, \omega)= \cap_{J \in E(\mathcal{H})}\mm_J^{k\omega(J)}$ is called {\em ideal of $k$-covers of $(\mathcal{H},\omega)$}, as described in \cite{MM}. The ideal $I_k(\mathcal{H}, \omega)$ can also be viewed as {\em intersection of Veronese ideals}, as defined in \cite{FT}. 
Mohammadi and Moradi in \cite{MM} and Francisco and Tyul in \cite{FT} studied the componentwise linearity of $I_k(\mathcal{H}, \omega)$ for different classes of hypergraphs. The aim of this section is to extend their work and also to find under what conditions on the edges of $\mathcal{H}$, the ideal of $k$-covers of $(\H,\omega)$ satisfies some exchange property. In the following work, we discuss componentwise linearity of some classes of totally balanced weighted hypergraphs. To this end, we recall some definitions and notations. 
 
 Let $\H$ be a hypergraph. A sequence $v_1, J_1, v_2, J_2, \ldots, v_s, J_s, v_{s+1}=v_1$ of distinct edges $J_1, \ldots, J_s$ and distinct vertices $v_1, \ldots, v_s$ of $\mathcal{H}$ is called a {\em cycle} of length $s$ in $\mathcal{H}$ if $v_i,v_{i+1} \in J_i$, for all $i=1, \ldots, s$. Such a cycle is called {\em special} if no edge contains more than two vertices of the cycle. The concept of special cycles in hypergraph generalizes the notion of cycles in graphs. Hypergraphs without any special cycles of length greater than three are called {\em totally balanced hypergraphs}, see \cite[Chapter 5]{B}. In the language of simplicial comlexes, totally balanced hypergraphs correspond to simplcial forests, introduced by Faridi in \cite{Far1}.  The edge ideals of totally balanced hypergraphs possess many nice properties as noted in \cite{Far1} and \cite{Far2}. For example, it is shown in \cite[Corollaries  5.5 and  5.6]{Far2} that if $\H$ is a totally balanced hypergraph (equivalently, a simplicial forest), then the ideal of $1$-covers of $\H$ is  componentwise linear. In the following theorem, we consider a special class of totally balanced weighted hypergraphs, whose ideals of $k$-covers admit the non-pure dual exchange property, and hence they are componentwise linear.

\begin{Theorem}\label{thm:intersection}
Let $J_1,\ldots, J_s, K\subseteq [n]$  be such that $J_i\cap
J_j=\cap_{t=1}^s J_t$ for all $1\leq i<j\leq s$ and $K\cap
J_t=\emptyset$ for all $t=1, \ldots, s$. Then for any positive
integers $a_1, \ldots, a_{s},b$, the ideal $I=\mm^{a_1}_{J_1}\cap
\mm^{a_2}_{J_2}\cap\cdots \cap  \mm^{a_s}_{J_s}\cap\mm^b_K\subset
S=K[x_1, \ldots, x_n]$ has the non-pure dual exchange property.
\end{Theorem}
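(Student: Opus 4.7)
The plan is to verify the non-pure dual exchange property directly by case analysis on the location of the index $i$, after extracting strong consequences from the minimality of $u,v \in G(I)$. Set $C := J_1 \cap \cdots \cap J_s$, so that $C$, $J_1\setminus C,\ldots,J_s\setminus C$ and $K$ are pairwise disjoint; a monomial $w$ lies in $I$ exactly when $\deg_{J_t}(w) \geq a_t$ for every $t$ and $\deg_K(w)\ge b$.

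First I would record several minimality facts for any $w \in G(I)$. Since $b>0$, $w$ contains some $K$-variable, and because $K\cap J_t=\emptyset$ for every $t$, removing such a variable only affects $\deg_K$; hence $\deg_K(w)=b$. If $x_i\mid w$ with $i\in J_t\setminus C$, then $i$ appears in no other condition, so $\deg_{J_t}(w)=a_t$. If $x_i\mid w$ with $i\in C$, then removing $x_i$ decreases every $\deg_{J_{t'}}(w)$, hence $\deg_{J_{t}}(w)=a_{t}$ for at least one $t$. A useful consequence is the rigid ``profile'' $\deg_{J_{t}\setminus C}(w)=\max(0,a_{t}-\gamma)$ with $\gamma:=\deg_C(w)$; in particular $\deg(w)=b+h(\gamma)$, where $h(\gamma):=\gamma+\sum_{t}\max(0,a_{t}-\gamma)$ is piecewise linear with slope $1-|\{t:a_t>\gamma\}|$, hence non-increasing on $\{0,1,\dots,\max_t a_t\}$.

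Now fix $u,v\in G(I)$ with $\deg(u)\le\deg(v)$ and $\deg_{x_i}(v)<\deg_{x_i}(u)$. If $i\in K$, then $\deg_K(v)\ge b=\deg_K(u)$ yields some $j\in K\setminus\{i\}$ with $\deg_{x_j}(v)>\deg_{x_j}(u)$, and since $i,j$ lie outside every $J_t$ the swap $x_i(v/x_j)$ preserves all $J_t$- and $K$-counts. If $i\in C$, split the positive excess $\sum_{k\ne i}(\deg_{x_k}(v)-\deg_{x_k}(u))\ge\deg_{x_i}(u)-\deg_{x_i}(v)$ into a $K$-part and a non-$K$-part. If the non-$K$-part is positive, pick $j\notin K\cup\{i\}$ with $\deg_{x_j}(v)>\deg_{x_j}(u)$; then $x_i\in C$ ensures $\deg_{J_{t'}}(x_i(v/x_j))\ge\deg_{J_{t'}}(v)\ge a_{t'}$ for every $t'$, and $\deg_K$ is unchanged. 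Otherwise the $K$-part is strictly positive, forcing $\deg_K(v)\ge b+1$, and any $j\in K$ with $\deg_{x_j}(v)>\deg_{x_j}(u)$ works.

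The main obstacle is the case $i\in J_t\setminus C$. From $\deg_{J_t}(u)=a_t\le\deg_{J_t}(v)$ one gets some $j\in J_t\setminus\{i\}$ with $\deg_{x_j}(v)>\deg_{x_j}(u)$; if $j\in J_t\setminus C$, the swap is immediate. Otherwise every such $j$ lies in $C$, and the plan is to show $\deg_{J_{t'}}(v)\ge a_{t'}+1$ for every $t'\ne t$, after which any $j\in C$ with $\deg_{x_j}(v)>\deg_{x_j}(u)$ (which exists because $\deg_C(v)>\deg_C(u)$ in this subcase) gives $x_i(v/x_j)\in I$. To prove this strict inequality I would invoke the profile above: the ``every such $j$ lies in $C$'' hypothesis combined with the profile formula for $\deg_{J_t\setminus C}$ forces $\gamma_v>\gamma_u$, and then $h(\gamma_u)\le h(\gamma_v)$ together with $h$ being non-increasing compels both $\gamma_u,\gamma_v$ to lie on a flat segment of $h$. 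Such a segment exists only when a unique maximum $a_{t^*}$ dominates $a_1,\dots,a_s$, and $a_t>\gamma_u$ (from $\deg_{x_i}(u)>0$ and minimality) pins $t=t^*$. Hence for every $t'\ne t$ one has $\gamma_v>\gamma_u\ge\max_{t'\ne t^*}a_{t'}\ge a_{t'}$, so $\gamma_v\ge a_{t'}+1$ and $\deg_{J_{t'}\setminus C}(v)=0$, yielding $\deg_{J_{t'}}(v)=\gamma_v\ge a_{t'}+1$, as required.
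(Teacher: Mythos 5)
Your proposal is correct and follows the same broad strategy as the paper: decompose $\supp(w)$ into $C$, $J_t\setminus C$, and $K$; derive rigid minimality constraints on $w\in G(I)$; and split into cases according to where $i$ lies. The minimality facts you record are exactly the paper's observations (i)--(iii), and your explicit profile formula $\deg_{J_t\setminus C}(w)=\max(0,a_t-\gamma)$ with $\deg(w)=b+h(\gamma)$ is equivalent to the paper's equation~(\ref{eq:degree}). Where you genuinely diverge is the treatment of the hard case $i\in J_t\setminus C$: the paper fixes an ordering $a_1\leq\cdots\leq a_s$, splits on whether $\deg(v)=a_s+b$ or $\deg(v)>a_s+b$, introduces minimal indices $k,\ell$, and proves the claim $\deg(v')\leq\deg(u')$ by comparing $k$ and $\ell$; you instead observe that $h$ is non-increasing on $\{0,\ldots,\max_t a_t\}$, so $\gamma_u<\gamma_v$ together with $h(\gamma_u)\leq h(\gamma_v)$ forces a flat segment, pinning $t$ to the unique dominant index and giving the slack $\deg_{J_{t'}}(v)\geq a_{t'}+1$ needed to move a $C$-variable. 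This monotonicity argument is cleaner and sidesteps the paper's WLOG ordering and double case analysis. Two small blemishes worth fixing: you should state explicitly that $\supp(w)\subseteq K\cup\bigcup_t J_t$ for $w\in G(I)$ (used silently when choosing $j$ and in the formula $\deg(w)=b+h(\gamma)$), and that $\gamma\leq\max_t a_t$ for $w\in G(I)$ (needed to apply the monotonicity of $h$); also, in the case $i\in C$ the ``$K$-part'' of the excess is always $0$ since $\deg_K(u)=\deg_K(v)=b$, so your ``otherwise'' branch there is vacuous and can be dropped.
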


\begin{proof}
Let $B=\cap_{t=1}^s J_t$, and set $A_t=J_t\setminus B$ for all $t=1,
\ldots, s$. Without loss of generality, we may assume that $a_1\leq
\cdots \leq a_s$, and $J_i\neq J_j$ for all distinct $i$ and $j$.
For any monomial $w\in S$, we write $w=w_1\cdots w_s w'w''h$ such that
$\supp(w_t) \subseteq A_t$ for all $t=1, \ldots, s$,   $\supp(w')
\subseteq B$ and $\supp(w'') \subseteq K$, and $h$ is some monomial
with $\supp(h) \cap [ J_1\cup \cdots \cup J_s  \cup K ]= \emptyset$.
Then $w \in I$ if and only if for each $t= 1, \ldots,  s$,
\begin{equation}\label{eq2}
\deg(w')+\deg(w_t)\geq a_t  \quad \text{and}\quad \deg(w'') \geq b.
\end{equation}
Moreover, if $w \in G(I)$, then $h=1$ and using $a_1\leq \cdots \leq
a_s$, we have the following:

\begin{enumerate}
\item[(i)] $\deg(w'') = b$ and $\deg(w')\leq a_s$. Moreover, $\deg(w') =a_s$ if and only if $\deg(w_t)=0$, for all $t=1, \ldots, s$.
\item[(ii)] For each $t=1, \ldots, s$, $\deg(w_t) \neq 0 $ if and only if $\deg(w') < a_t$.
Moreover, if $\deg(w') \leq  a_t$ for some $t=1 , \ldots, s$, then
$\deg(w') +\deg(w_t)  =a_t$.

Indeed, if $\deg(w') < a_t$, then  $\deg(w_t) \neq 0 $ is a direct
consequence of (\ref{eq2}).  On the other hand, if $\deg(w_t) \neq 0
$, and $\deg(w') \geq  a_t$, then $w/w_t \in I$ because it satisfies
the inequalities in (\ref{eq2}) and strictly divides $w$, a
contradiction to the assumption that $w \in G(I)$.

\item[(iii)] Combining (ii) with  $\deg(w')\leq a_s$, we obtain $\deg(w') +\deg(w_s)  =a_s$. This gives
\begin{equation}\label{eq:degree}
\deg(w)=\sum_{t=1}^s\deg(w_t)+\deg(w')+\deg(w'')=
\sum_{t=1}^{s-1}\deg(w_t)+a_s+b.
\end{equation}
Moreover, we have $a_s+b \leq \deg(w) \leq
\sum_{i=1}^sa_i+b$.
\end{enumerate}

Let $u,v \in G(I)$ and $u=u_1\cdots u_s u'u''$ and $v=v_1\cdots v_s
v'v''$ as described above. Now, we show that $I$ has the non-pure dual
exchange property. To do this, let $\deg(u)\leq \deg(v)$ and
$\deg_{x_i}(v) < \deg_{x_i}(u)$ for some $i$. We need to show that
there exists $j$ such that $\deg_{x_j}(v) > \deg_{x_j}(u)$ and
$x_i(v/x_j)\in I$. We consider the following cases:

{\bf Case 1:} Let $i\in K$. Since $ \deg(u'')=\deg(v'')=b$, it
follows that there exists some $j \in K$ such that  $\deg_{x_j}(v) >
\deg_{x_j}(u)$. Then for any such $j$, the monomial $x_i(v/x_j)$
satisfies both inequalities in (\ref{eq2}) and hence $x_i(v/x_j) \in
I$.

{\bf Case 2:} Let $i\in B$.  Since $u, v\in G(I)$ and $
\deg(u'')=\deg(v'')=b$, there exists some $j \in \cup_{t=1}^s J_t$
such that  $\deg_{x_j}(v) > \deg_{x_j}(u)$. Then for any such $j$,
the monomial $x_i(v/x_j)$ satisfies both inequalities in (\ref{eq2})
and hence $x_i(v/x_j) \in I$.

{\bf Case 3:} Let $i\in A_p$, for some $p=1, \ldots, s$.  Then $0<
\deg(u_p)$ which together with (i) gives that $\deg(u')<a_s$.

{\bf Subcase 3.1:} Let $a_s+b=\deg(v)$. Since $\deg(u) \leq
\deg(v)$, it follows from (iii) that $a_s+b=\deg(u)$. Then due to
(\ref{eq:degree}), we have
$\sum_{t=1}^{s-1}\deg(u_t)=0=\sum_{t=1}^{s-1}\deg(v_t)$, that is
$\deg(u_t)=0=\deg(v_t)$, for all $t=1, \ldots, s-1$. Since $0<
        \deg(u_p)$, we conclude $p=s$, that is $i \in A_s$.  We can write
$u=u_su'u''$ and $v=v_sv'v''$.

If there exists some  $j \in A_s$ such that  $\deg_{x_j}(v) >
\deg_{x_j}(u)$, then  $x_i(v/x_j)$ satisfies both inequalities in
(\ref{eq2}) and hence $x_i(v/x_j) \in I$, as required. If no such
$j$ exists, then $\deg(v_s)<\deg(u_s)$, which in return gives
$\deg(v')>\deg(u')$. Then there exists some $j \in B$ such that
$\deg_{x_j}(v) > \deg_{x_j}(u)$. The first inequality of (\ref{eq2})
together with $\deg(u_t)=0$, for all $t=1, \ldots, s-1$ gives
$\deg(u')=\deg(u')+\deg(u_t)\geq  a_t$. This gives
$\deg(v')>\deg(u')\geq a_t$ for all $t=1, \ldots, s-1$. Then for
$v$, the inequalities in (\ref{eq2}) takes the following form: for
all $t=1, \ldots, s-1$, we have
\begin{equation}\label{eq:degv} \deg(v')+\deg(v_t)=\deg(v')> a_t,\;
\deg(v')+\deg(v_s)= a_s\quad \text{and}\quad \deg(v'') \geq b.
\end{equation}
Due to (\ref{eq:degv}), the monomial $x_i(v/x_j)$ satisfies both
inequalities in (\ref{eq2}) and hence $x_i(v/x_j) \in I$, as
required.

{\bf Subcase 3.2:} Now, we consider the final case when
$a_s+b<\deg(v)$.  Then following (i) and (\ref{eq:degree}) gives
$\deg(v')<a_s$. On the other hand, we have $\deg(u')<a_s$. Now, let
$k$ and $\ell$ be the minimum integers such that $\deg(u')< a_k$ and
$\deg(v')< a_\ell$. If $\ell=s$, then $a_{t}\leq \deg(v')$ for all
$t=1, \ldots, s-1$. Then using (ii) and (\ref{eq:degree}) gives
$\deg(v)=a_s+b$, a contradiction. Therefore, $\ell<s$.

{\bf Claim:}  $\deg(v')\leq \deg(u')$.

Assume that the claim holds. Since $0<\deg(u_p)$, using (ii), we
have $\deg(u')<a_p$ and $a_p-\deg(u')= \deg(u_p)$. Now, using the
assumption that the claim holds, we have $\deg(v')\leq\deg(u')<a_p$.
Again from (ii), we obtain that
 $\deg(u_p)=a_p-\deg(u')\leq a_p-\deg(v')=\deg(v_p)$.
Therefore, there exists some $j\in A_p$, such that $\deg_{x_j}(v) >
\deg_{x_j}(u)$, and for any such $j$, the monomial $x_i(v/x_j)$
satisfies both inequalities in (\ref{eq2}). Hence $x_i(v/x_j) \in
I$, as required. Now, it only remains to prove the claim.

Proof of claim: If $\ell<k$, then by the minimality of $\ell$ and
$k$, we have $\deg(v')<a_\ell \leq \deg(u')< a_k$. This gives,
$\deg(v') < \deg(u')$, as required.

Now, suppose that $k \leq \ell$.  Using (ii) and (\ref{eq:degree}),
we have
\begin{equation*}
\deg(v)=\sum_{t=\ell}^{s-1}\deg(v_t)+a_s+b    \text{ and } \deg(u)=
\sum_{t=k}^{s-1}\deg(u_t)+a_s+b
\end{equation*}
which gives
\begin{equation}\label{eq:min}
0\leq \deg(v)-\deg(u)= \sum_{t=\ell}^{s-1} \deg(v_t) -
\sum_{t=k}^{s-1}\deg(u_t).
\end{equation}
If $k=\ell$, then using (\ref{eq:min}) together with (ii) and
$a_\ell\leq\cdots \leq a_{s-1}$ gives
\begin{equation*}
\begin{split}
0 \leq  \deg(v)-\deg(u)&=\sum_{t=\ell}^{s-1} \deg(v_t) - \sum_{t=\ell}^{s-1}\deg(u_t) \\
&= \sum_{t=\ell}^{s-1} [a_t-\deg(v')] - \sum_{t=\ell}^{s-1} [a_t-\deg(u')]\\
&=  (s-\ell)[\deg(u')-\deg(v')].
\end{split}
\end{equation*}
Since $s-\ell>0$, the claim holds. On the other hand, if $k<\ell$,
then $\deg(u')<a_k\leq \deg(v')< a_\ell$ gives $\deg(u') <
\deg(v')$. Moreover, using (\ref{eq:min}) together with (ii) and
$a_\ell\leq\cdots \leq a_{s-1}$ gives
\begin{equation*}
\begin{split}
0\leq   \deg(v)-\deg(u)&=\sum_{t=\ell}^{s-1}\deg(v_t)- \sum_{t=k}^{\ell-1}\deg(u_t)- \sum_{t=\ell}^{s-1}\deg(u_t)\\
&= \sum_{t=\ell}^{s-1}[ \deg(v_t) - \deg(u_t)]- \sum_{t=k}^{\ell-1}\deg( u_t)\\
&=  (s-\ell)[\deg(u')-\deg(v')]- \sum_{t=k}^{\ell-1}\deg( u_t).
\end{split}
\end{equation*}
Then $(s-\ell)[\deg(u')-\deg(v')]\geq  \sum_{t=k}^{\ell-1}\deg(
u_t)$. But $s-\ell>0$ and $\deg(u')-\deg(v') <0$, we obtain $
\sum_{t=k}^{\ell-1}\deg( u_t)<0 $, which is false. This shows that
$k<\ell$ does not hold. This completes proof of claim.
\end{proof}

In \cite{MM}, it is shown that the ideals discussed in above theorem are in fact weakly polymatroidal. By setting $K=\emptyset$ in Theorem~\ref{thm:intersection}, we recover \cite[Corollary 3.2]{FT}. 

\begin{Corollary}
Let $J_1,J_2\subseteq
[n]$ and let $a$ and $b$ be positive integers.
Then $I=\mm^a_{J_1}\cap \mm^b_{J_2} \subset K[x_1, \ldots, x_n]$
satisfies the non-pure dual exchange property.
\end{Corollary}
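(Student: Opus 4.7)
The plan is to obtain this as an immediate special case of Theorem~\ref{thm:intersection}. I would set $s = 2$, let $J_1, J_2$ be the two given subsets with exponents $a_1 = a$ and $a_2 = b$, and take the auxiliary edge $K$ to be empty so that the $\mm_K^b$ factor disappears from the intersection. Under these choices the hypothesis $J_i \cap J_j = \bigcap_{t=1}^s J_t$ collapses to the tautology $J_1 \cap J_2 = J_1 \cap J_2$, and the condition $K \cap J_t = \emptyset$ is vacuous, so no restriction whatsoever is placed on the overlap of $J_1$ and $J_2$.

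The only obstacle is the superficial one that the statement of Theorem~\ref{thm:intersection} treats $K$ as a genuine edge with positive exponent $b$. I would address this by inspecting the proof of that theorem: each monomial is decomposed as $w = w_1 \cdots w_s w' w'' h$, where $w''$ is the $K$-component. When $K = \emptyset$ we always have $w'' = 1$, the inequality $\deg(w'') \geq b$ drops out entirely, Case~1 of the case analysis (handling $i \in K$) does not arise, and Cases~2 and~3 go through verbatim. Consequently $I = \mm^a_{J_1} \cap \mm^b_{J_2}$ inherits the non-pure dual exchange property.

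I expect no further difficulty; no genuine new ideas are required beyond the case analysis already carried out in the proof of Theorem~\ref{thm:intersection}. Via Theorem~\ref{thm:linquotient} this also yields linear quotients for $I$, and in particular recovers the componentwise linearity statement of \cite[Corollary 3.2]{FT}.
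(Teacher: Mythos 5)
Your proposal is correct and takes exactly the same route as the paper, which obtains the corollary simply by setting $K=\emptyset$ in Theorem~\ref{thm:intersection} without supplying a separate proof. Your careful observation that $K=\emptyset$ must be interpreted as dropping the $\mm^b_K$ factor and the constraint $\deg(w'')\geq b$ from the proof (rather than literally intersecting with $\mm^b_\emptyset=(0)$) is a genuine point that the paper glosses over, and your case-by-case check that the rest of the argument goes through is exactly the right way to justify it.
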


It is shown in \cite[Theorem 4.3]{FT} that the ideal of $k$-covers of any weighted hypergraph with three edges is componentwise linear. However, such ideals need not to be componentwise polymatroidal. This prompts the natural question of whether these ideals satisfy some exchange property on their generators. In the following example, we observe that these ideals may not possess the non-pure dual exchange property, even when the hypergraph is totally balanced. 
\begin{Example}\label{exp:3ideals}{\em 
Let $\mm_{J_1}=(x_1 ,
	x_2)$, $\mm_{J_2}=(x_2 , x_3,x_4)$, $\mm_{J_3}=(x_4, x_5)$, $  I=
	\mm_{J_1}\cap \mm_{J_2}\cap \mm_{J_3}$. It is easy to see that $J_1, J_2, J_3$ determine a totally balanced hypergraph. Consider the monomial
	$u=x_2x_5$ and $v=x_1x_4$ in $G(I)$. We have $\deg(u)=\deg(v)$ and
	$\deg_{x_5}(v)<\deg_{x_5}(u)$, but $x_1x_5$ and $x_4x_5$ do not
	belong to $I$. This shows that $I$ does not have the non-pure dual exchange property.
}
\end{Example}

Next, we show that for any totally balanced weighted hypergraph $(\H, \omega)$ with three edges, the ideal of $k$-covers of $(\H, \omega)$ is weakly polymatroidal with some conditions on $\omega$. The exchange property satisfied by polymatroidal ideals was generalized by Hibi and Kokubo in \cite{KH} through the introduction of so-called weakly polymatroidal ideals. These ideals, as studied and defined in \cite{KH}, are generated in the same degree. This definition was further extended by Mohammadi and Moradi in \cite{MM}, where they considered weakly polymatroidal ideals that are not necessarily generated in the same degree. Below we recall the definition of weakly polymatroidal ideals as given in \cite{MM}.
	\begin{Definition}\label{weakly}
		Let $u, v \in S=K[x_1, \ldots, x_n]$ be two monomials with
		$u=x_1^{a_1}\cdots x_n^{a_n}$ and $v=x_1^{b_1}\cdots x_n^{b_n}$. Let
		$>_{\lex}$ denote the {\em pure lexicographical order} induced on $S$ with
		respect to the total order of variables $x_{i_1}>\cdots > x_{i_n}$.
		Then $u>_{\lex} v$ if and only if $\deg_{x_{i_k}} (u)=
		\deg_{x_{i_k}} (v)$ for some $k=1, \ldots, t-1$ and $\deg_{x_{i_t}}
		(u)> \deg_{x_{i_t}} (v)$.
		
		A monomial ideal $I\subset S$ is called {\em weakly polymatroidal}
		if for every two monomials $u=x_1^{a_1}\cdots x_n^{a_n}$ and
		$v=x_1^{b_1}\cdots x_n^{b_n}$ with $u>_{\lex} v$ and $\deg_{x_{i_k}}
		(u)= \deg_{x_{i_k}} (v)$ for some $k=1, \ldots, t-1$ and
		$\deg_{x_{i_t}} (u)> \deg_{x_{i_t}} (v)$, there exists some $j$ such
		that $x_{i_t}(v/x_{i_j}) \in I$.
	\end{Definition}

	\begin{Remark}{\em
			Weakly polymatroidal ideals and ideals with the non-pure dual
			exchange property are not necessarily equivalent.  For example, consider any weakly polymatroidal ideal
			$I$ generated in degree $d$ such that $I$ is not polymatoridal. According to
			\cite[Proposition 1.4]{BQ}, such an ideal
			$I$ does not satisfy the non-pure dual exchange property .
			On the other hand, let $I=(x_2x_3, x_1^2x_3)$. This ideal satisfies the non-pure dual
			exchange property. However $I$ is not weakly polymatroidal.
		}
	\end{Remark}
	It is important to note that the definition of weakly polymatroidal ideal depends on the total order of variables.

\begin{Theorem}\label{thm:3weakly}
	Let $J_1,J_2, J_3\subseteq [n]$ such that $J_1 \cap J_3 = \emptyset$
	and $S=K[x_1, \ldots, x_n]$. Then for any positive integers $a_1\geq
	a_2\geq a_3$, the ideal $I= \mm^{a_1}_{J_1}\cap  \mm^{a_2}_{J_2}
	\cap  \mm^{a_3}_{J_3}$ is weakly polymatroidal.
\end{Theorem}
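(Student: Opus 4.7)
My plan is to fix an explicit total order on the variables of $S$ and verify the weak polymatroidal property directly. Following the block decomposition of Theorem~\ref{thm:intersection}, I partition $[n]$ into the six disjoint parts
\[
A = J_1 \setminus J_2,\ B = J_1 \cap J_2,\ C = J_2 \setminus (J_1 \cup J_3),\ D = J_2 \cap J_3,\ E = J_3 \setminus J_2,\ F = [n] \setminus (J_1 \cup J_2 \cup J_3),
\]
with disjointness relying on $J_1 \cap J_3 = \emptyset$. Writing each monomial $w$ as $w = w_A w_B w_C w_D w_E w_F$ with $\supp(w_X) \subseteq X$ and denoting the block degrees by $\alpha, \beta, \gamma, \delta, \epsilon$, I observe that $w \in I$ iff $\alpha + \beta \geq a_1$, $\beta + \gamma + \delta \geq a_2$, and $\delta + \epsilon \geq a_3$; furthermore, $w \in G(I)$ forces $\deg w_F = 0$ and, for each $y \in \supp w$ lying in a block $X$, some $X$-involving inequality among the above three is tight at $w$.

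I would then impose the order $B > D > A > E > C > F$ (with an arbitrary total order inside each block) and take $u, v \in G(I)$ with $u >_{\lex} v$, first differing at $x$, so $\deg_x u > \deg_x v$. The goal is to find $y \in \supp v$ of lower priority than $x$ with $x(v/y) \in I$, splitting cases by the block of $x$. For $x$ in a ``singleton'' block ($A$, $E$, $C$, or $F$), the analysis is short: if $v$ has a same-block variable of lower priority than $x$, the trivial within-block exchange preserves all three inequalities. Otherwise, agreement on higher-priority variables pins down every block-degree of $v$ except the one for $x$'s block; minimality of $u$ at $x$ makes the unique $x$-block inequality tight at $u$; and the corresponding inequality for $v$ then fails, contradicting $v \in I$. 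The case $x \in F$ is immediate since $G(I)$-members have no $F$-support.

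The substantive cases are $x \in B$ and $x \in D$, treated under the assumption that $v$ has no same-block variable of lower priority than $x$. For $x \in D$, one gets $\beta(u) = \beta(v)$ and $\delta(u) > \delta(v)$, and minimality of $u$ at $x$ forces $\delta(u) + \epsilon(u) = a_3$ or $\beta(u) + \gamma(u) + \delta(u) = a_2$; comparison with $v \in I$ produces $\epsilon(v) > 0$ (take $y \in E$) or $\gamma(v) > 0$ (take $y \in C$). For $x \in B$, agreement on higher-priority $B$-variables gives $\beta(u) > \beta(v)$; if $\alpha(u) + \beta(u) = a_1$ then $\alpha(v) > 0$ and one takes $y \in A$; if instead $\beta(u) + \gamma(u) + \delta(u) = a_2$, then $\gamma(v) + \delta(v) > 0$ and one tries $y \in C$ (when $\gamma(v) > 0$), $y \in D$ (when $\delta(v) + \epsilon(v) > a_3$), or $y \in A$ (when $\alpha(v) > 0$). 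In each branch, a direct computation verifies that $x(v/y)$ satisfies all three inequalities.

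I expect the main obstacle to be the residual sub-case inside $x \in B$ where $\gamma(v) = 0$, $\delta(v) + \epsilon(v) = a_3$, and $\alpha(v) = 0$ simultaneously. I plan to rule it out by contradiction: $\alpha(v) = 0$ together with $\alpha(v) + \beta(v) \geq a_1$ forces $\beta(v) \geq a_1$, hence $\beta(u) > \beta(v) \geq a_1$, and the active alternative $\beta(u) + \gamma(u) + \delta(u) = a_2$ then yields $\gamma(u) + \delta(u) = a_2 - \beta(u) < a_2 - a_1 \leq 0$, which is impossible. This is the step where the hypothesis $a_1 \geq a_2$ is used essentially; more broadly, the choice of order $B > D > A > E > C > F$ encodes the hierarchy $a_1 \geq a_2 \geq a_3$ by prioritizing variables that participate in the larger constraints.
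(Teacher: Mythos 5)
Your proposal is correct and follows essentially the same route as the paper: the identical six-block partition of $[n]$, a block-by-block total order on the variables, membership and minimality expressed through the three block-degree inequalities, and a case analysis on the block containing the pivot variable. The only differences are cosmetic --- you swap the priority of $J_1\setminus J_2$ and $J_2\cap J_3$ in the order, and you rely on local tightness of constraints at each supported variable instead of the paper's global equality $\deg(w'_1)+\deg(w''_1)=a_1$ for minimal generators, which merely relocates the (correct) use of $a_1\geq a_2$ to your final residual sub-case.
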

\begin{proof}
	If either $J_1 \cap J_2 = \emptyset$ or $J_3 \cap J_2 = \emptyset$,
	then the assertion holds due to Theorem~\cite[Theorem 2.4]{MM}.
	Now, assume that $J_1 \cap J_2\neq \emptyset$ and $J_3 \cap J_2\neq
	\emptyset$. To show $I$ is weakly polymatroidal, first we introduce
	a total order on the variables of $S$. We set the following
	notations.
	\begin{enumerate}
		\item $J'_{1}:= J_{1}\cap J_{2}$, $J'_{2}:= J_{2}\cap J_{3},$
		\item  $J''_1 := J_1\setminus J_{2}$, $J''_2:= J_2\setminus (J_1\cup J_3)$, and  $J''_3 := J_3\setminus J_{2}$.
	\end{enumerate}
	
	With above notations, we have $J_1=J''_1 \sqcup J'_1$,
	$J_2=J'_1\sqcup  J'_2  \sqcup J''_2 $, and $J_3=J'_{2} \sqcup
	J''_3$. Set $K=[n]\setminus (J_1 \cup J_2 \cup J_3)$. For any variable $x_i$, the index $i$ belongs to a unique
	set among $J'_1, J'_2, J''_1, J''_2,J''_3, K$.  For each of these sets,
	choose any ordering of the variables with indices in it, and then we
	set a total order on all variables such that $x_i<x_j$ if and only
	if  in the following list, the set to which $i$ belongs to appear
	before the set to which $j$ belongs to
	\[
	J'_1, J''_1, J'_2, J''_3,  J''_2, K.
	\]
	In other words, we first list the variables with indices in $J'_1$,
	then we list the variables with indices in $J''_1$. After that we
	continue with listing the variables with indices in $J'_2$, and so
	on. Once all the variables with indices in $J_1\cup J_2 \cup J_3$
	are listed, then continue listing the rest of the variables in $K$.
	
	Given any monomial $w$, we write $w=w'_1w'_{2}w''_1w''_2w''_3h$ such
	that $\supp(w'_t) \subseteq J'_t$ for $t=1,2$ and $\supp(w''_t)
	\subseteq J''_t$ for $t=1,2,3$, and $\supp(h) \subseteq K$. Then $w
	\in I $ if and only if
	\begin{equation}\label{eq:inclusion}
		\begin{split}
			a_1&\leq \deg(w'_1)+\deg(w''_1),\\
			a_2&\leq  \deg(w'_{1})+\deg(w'_2)+\deg(w''_{2}),\\
			a_3&\leq  \deg(w'_2)+\deg(w''_{3}).
		\end{split}
	\end{equation}
	Moreover, if $w \in G(I)$, then $ a_1= \deg(w'_1)+\deg(w''_1)$.
	To justify this, we argue in similar way as in Theorem~\ref{thm:4ideals}.
	Suppose that $a_1<\deg(w'_1)+\deg(w''_1)$. If  $\deg(w'_1)> a_1$ then by setting $\tilde{w}'_1$ to
	be any monomial that divides $w'_1$ with $a_1=\deg(\tilde{w}'_1)$, we obtain $\tilde{w}=\tilde{w}'_1w'_{2}w''_1w''_2w''_3 \in I$
	because it satisfies (\ref{eq:inclusion}). Also, $\tilde{w}$ strictly divides $w$, a contradiction to $w \in G(I)$.
	Therefore, $\deg(w'_1)\leq a_1$ holds. Then $a_1<\deg(w'_1)+\deg(w''_1)$ gives $0\leq a_1-\deg(w'_1)< \deg(w''_1) $.
	Now, we set $\tilde{w}''_1$ to be any monomial that divides $w''_1$ with $\deg(\tilde{w}''_1)=a_1-\deg(w'_1)$,
	and take $\tilde{w}=w'_1w'_2\tilde{w}''_1w''_2w''_3$. Then $\tilde{w}$ satisfies (\ref{eq:inclusion})
	and strictly divides $w$, a contradiction to $w \in G(I)$. Hence, $w \in G(I)$ gives  $a_1=\deg(w'_1)+\deg(w''_1)$.
	
	From above discussion, we conclude that $w \in G(I)$ if and only if
	$h=1$ and
	\begin{equation}\label{eq:gen}
		\begin{split}
			a_1&= \deg(w'_1)+\deg(w''_1),\\
			a_2&\leq  \deg(w'_{1})+\deg(w'_2)+\deg(w''_{2}),\\
			a_3& \leq \deg(w'_2)+\deg(w''_{3}).\\
		\end{split}
	\end{equation}
	
	Let $u=u'_1 u'_2u''_1u''_2u''_3$, and $v=v'_1v'_2v''_1v''_2v''_3$ be
	two elements in $G(I)$ with $u >_{\lex} v$. Let $i \in [n]$ be such that
	$\deg_ {x_i}(u)>\deg_{x_i}(v)$ and for all $x_r>x_i$, we have $\deg_
	{x_r}(u)=\deg_{x_r}(v)$. To show $I$ is weakly polymatroidal, we
	need to find a suitable $j$ such that  $x_i>x_j$ and $x_i(v/x_j) \in
	I$. We consider the following cases:
	
	Case 1: Let  $i \in \supp(u'_1)\subseteq J'_1$.  If there exists
	some $j \in \supp(v'_1)$ for which $x_i>x_j$, then $x_i(v/x_j) \in
	I$ because $x_i(v/x_j)$ satisfies the inequalities in
	(\ref{eq:inclusion}). Otherwise, we have $\deg(v'_1)<\deg(u'_1)$.
	Since $\deg(u'_1)+\deg(u''_1)=a_1= \deg(v'_1)+\deg(v''_1)$ due to
	(\ref{eq:gen}), we obtain $\deg(v''_1)>\deg(u''_1)\geq 0$, and there
	exists some $j \in \supp(v''_1)$. Then $x_i>x_j$ and $x_i(v/x_j) \in
	I$ because $x_i(v/x_j)$ satisfies the inequalities in
	(\ref{eq:inclusion}).
	
	Case 2: Let  $i \in \supp(u''_1)\subseteq J''_1$. Then $u'_1=v'_1$
	and it follows from the equality in (\ref{eq:gen}) that
	$\deg(u''_1)=\deg(v''_1)$. Since $\deg_{x_i}(u)>\deg_{x_i}(v)$, we
	obtain some $j  \in \supp(v''_1)$ for which
	$\deg_{x_j}(u)<\deg_{x_j}(v)$. This gives $x_j<x_i$. Furthermore,
	$x_i(v/x_j) \in I$ because  $x_i(v/x_j)$ satisfies the inequalities
	in (\ref{eq:inclusion}).
	
	Case 3: Let  $i \in \supp(u'_2)\subseteq J'_2$. Then $u'_1=v'_1$ and $u''_1=v''_1$.
	We claim that either there exists some $j  \in \supp(v'_2)$ with
	$x_j<x_i$ or $\supp(v''_2)\cup \supp(v''_3)\neq \emptyset$. If the
	claim holds then there exists some j such that $x_j<x_i$ and
	$x_i(v/x_j) \in I$ because  $x_i(v/x_j)$ satisfies the inequalities
	in (\ref{eq:inclusion}), and we are done.
	
	Suppose that no such  $j  \in \supp(v'_2)$  exists and
	$\supp(v''_2)\cup\supp(v''_3)=\emptyset$.
	Then $v'_2$ strictly divides $u'_2 $ and $\deg(v''_2)=0=\deg(v''_3)$. This gives $v=u'_1u''_1v'_2$ and $v$ strictly divides $u$. This implies that $u$
	is not a minimal generator of $I$, a contradiction. Hence, the claim
	holds.
	
	Case 4: Let  $i \in \supp(u''_3)$. Then $u'_1=v'_1$, $u''_1=v''_1$ and $u'_2=v'_2$. Assume that $\deg(u'_2) \geq a_3$.
	Then the monomial $u/  u''_3 \in I$ because
	it satisfies the inequalities in (\ref{eq:inclusion}). Since $\deg(u''_3)\neq 0$,
	it yields $u / u''_3$ strictly divides $u$,  a contradiction to $u$ being a
	minimal generator of $I$. Therefore, we have $\deg(u'_2)<a_3$.
	This also shows in the case when $\deg(u''_3)\neq 0$, the last inequality of (\ref{eq:gen}) is indeed an equality $\deg(u'_2)+\deg(u''_3)=a_3$,
	otherwise we would have $u \not\in G(I)$, a contradiction.
	
	Since $v'_2=u'_2$, it yields $\deg(v'_2)=\deg(u'_2)<a_3$, and consequently $\deg(v''_3)=\deg(u''_3)=a_3-\deg(u'_2)$.
	Now, since $u''_3\neq v''_3$ and $u>_{\lex}
	v$, it follows that there exists some $j  \in \supp(v''_3)$ with  $x_j<x_i$.
	Furthermore, $x_i(v/x_j) \in I$ because $x_i(v/x_j)$ satisfies the inequalities in (\ref{eq:inclusion}), and we are done.
	
	Case 5: Let $i \in \supp(u''_2)$. Then $u'_1=v'_1$, $u''_1=v''_1$, $u'_2=v'_2$, $u''_3=v''_3$. Since $v$ does
	not divide $u$ because $u \in G(I)$, we obtain some $j  \in
	\supp(v''_2)$ for which $\deg_{x_j}(u)<\deg_{x_j}(v)$. Then
	$x_j<x_i$ and $x_i(v/x_j) \in I$ because  $x_i(v/x_j)$ satisfies the
	inequalities in (\ref{eq:inclusion}).
\end{proof}

\begin{Remark}{\em The conditions $a_1\geq a_2 \geq a_3$ and $J_1\cap
		J_3=\emptyset$	introduced in Theorem~\ref{thm:3weakly} are needed. We justify this in the following examples.
	\begin{enumerate}
		\item 	Let $a_1, a_2$ and $a_3$ be any positive integers. Then $I$
	described in Theorem~\ref{thm:3weakly} need not to be weakly
	polymatroidal with respect to the total order described in
	Theorem~\ref{thm:3weakly}. To see this, take $\mm_{J_1}=(x_1 ,
	x_2)$, $\mm_{J_2}=(x_2 , x_3,x_4)$, $\mm_{J_3}=(x_4, x_5)$, and $ I=
	\mm_{J_1}^2\cap \mm_{J_2}^3\cap \mm_{J_3}^2$. Then
	$x_2>x_1>x_4>x_5>x_3$, and $x_2^3x_5^2>x_2^2x_4^2$. But
	$x_2^3x_4\notin I$.
	\item The condition $J_1\cap
	J_3$ is necessary in Theorem~\ref{thm:3weakly}. In other words, if $\mathcal{H}$ is not a totally balanced hypergraph then its ideal of $k$-covers need not to be weakly polymatroidal. For example, take  $\mm_{J_1}=(x_1 , x_2,x_3)$,
	$\mm_{J_2}=(x_3 , x_4,x_5)$, $\mm_{J_3}=(x_1, x_5,x_6)$, and $    I=
	\mm_{J_1}^2\cap \mm_{J_2}^2\cap \mm_{J_3}^2$. We claim that $I$ is
	not weakly polymatroidal with respect to any total order on the
	variables $x_1, \ldots, x_6$. To prove this, first observe that for
	$I$ to be weakly polymatroidal we must have $x_5>x_4$. To see this
	consider the monomials $x_2^2x_5^2$ and $x_2^2x_4^2x_6^2$. If
	$x_4>x_5$, then $x_2^2x_4^2x_6^2>x_2^2x_5^2$, but $x_4x_2^2x_5,
	x_4x_2x_5^2, x_6x_2^2x_5, x_6x_2x_5^2 \not\in I$. Similarly, one can
	see $x_3>x_4$, by comparing $x_3^2x_6^2$ and $ x_2^2x_4^2x_6^2$.
	Indeed, if $x_4>x_3$, then $x_2^2x_4^2x_6^2>x_3^2x_6^2 $, but
	$x_2x_3x_6^2, x_2x_3^2x_6, x_4x_3x_6^2, x_4x_3^2x_6 \not\in I$.
	
	Since the role of $x_1$, $x_3$ and $x_5$ is symmetrical, we may
	choose to set $x_1>x_3>x_5$. Then together with $x_5>x_4$ and
	$x_3>x_4$, we have $x_1^2x_4^2>x_1x_3x_5$ but $x_1^2x_3$ and
	$x_1^2x_5$ are not in $I$. Hence $I$ is not weakly polymatoridal.
		\end{enumerate}
	}
\end{Remark}

In \cite[Remark 4.4]{FT}, authors provided an example of a hypergraph with four edges (which can also be viewed as the ideal of a tetrahedral curve),  that is not componentwise linear. We recall this example. Let $I= (x_1, x_2) \cap (x_2,x_3) \cap (x_3, x_4) \cap (x_4,x_1)$. Then $I$ is an ideal of $1$-covers, or simply the vertex cover ideals, of a cycle of length 4. It can be easily verified that $I$ is not componentwise linear. Below, we investigate the case of totally balanced hypergraphs with four edges under certain conditions. In fact, we consider a class of hypergraphs on four edges which takes form of a path graph on five vertices if all the edges have size two. In the sequel, we denote the union of two disjoint sets $A$ and $B$ with $A\sqcup B$.

\begin{Theorem}\label{thm:4ideals}
    Let $J_1,J_2,J_3, J_4 \subset [n]$ such that
    \begin{enumerate}
        \item $J_1 \cap J_3=J_1\cap J_4 = J_2\cap
        J_4=\emptyset$,
        \item         	$J_2 \subseteq J_1 \cup J_3$ and $J_3 \subseteq J_2 \cup J_4$.
    \end{enumerate}
    Then for any positive integer $a$, the ideal $I= \mm^{a}_{J_1}\cap
    \mm^{a}_{J_2} \cap  \mm^{a}_{J_3}\cap \mm^{a}_{J_4}\subset
    S=K[x_1,\ldots,x_n]$ satisfies the non-pure dual exchange property.
\end{Theorem}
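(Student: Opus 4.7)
The plan is to decompose the vertex set according to the four-edge ``path'' structure of $J_1,\ldots,J_4$, translate membership in $I$ and in $G(I)$ into a system of chain inequalities together with saturation conditions, and then verify the non-pure dual exchange property by case analysis on which part contains the index $i$. Concretely, I would set
$B_1 = J_1\setminus J_2$, $B_{12} = J_1\cap J_2$, $B_{23} = J_2\cap J_3$, $B_{34} = J_3\cap J_4$, $B_4 = J_4\setminus J_3$; the disjointness and containment hypotheses (i) and (ii) force these to be pairwise disjoint with $J_1 = B_1\sqcup B_{12}$, $J_2 = B_{12}\sqcup B_{23}$, $J_3 = B_{23}\sqcup B_{34}$, $J_4 = B_{34}\sqcup B_4$. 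Writing any monomial as $w = w_1 w_{12} w_{23} w_{34} w_4 h$ with $\supp(w_P)\subseteq B_P$ and $\supp(h)\cap(J_1\cup\cdots\cup J_4)=\emptyset$, and putting $\alpha_P(w)=\deg(w_P)$, the same reasoning as in Theorem~\ref{thm:intersection} shows that $w\in I$ is equivalent to the four chain inequalities
\[
\alpha_1+\alpha_{12}\geq a,\;\alpha_{12}+\alpha_{23}\geq a,\;\alpha_{23}+\alpha_{34}\geq a,\;\alpha_{34}+\alpha_4\geq a,
\]
and that $w\in G(I)$ is equivalent to $h=1$ together with the saturation conditions $\alpha_1>0\Rightarrow\alpha_1+\alpha_{12}=a$, $\alpha_4>0\Rightarrow\alpha_{34}+\alpha_4=a$, and for each interior part at least one of the two adjacent chain inequalities is tight.

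Given $u,v\in G(I)$ with $\deg u\leq\deg v$ and an index $i$ with $\deg_{x_i}(v)<\deg_{x_i}(u)$, I would first invoke the evident involutive symmetry of $I$ swapping $J_1\leftrightarrow J_4$ and $J_2\leftrightarrow J_3$ (which interchanges $B_1\leftrightarrow B_4$, $B_{12}\leftrightarrow B_{34}$ and fixes $B_{23}$) to reduce to $i\in B_1\cup B_{12}\cup B_{23}$. In each of these three cases the recipe is identical: if $\alpha_{P_i}(v)\geq\alpha_{P_i}(u)$ then any $j$ in the same part $P_i$ with $\deg_{x_j}(v)>\deg_{x_j}(u)$ works, because none of the chain inequalities are disturbed. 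Otherwise, saturation of $u$ at the chain inequality involving $P_i$ forces an adjacent part $P'$ to satisfy $\alpha_{P'}(v)>\alpha_{P'}(u)$, and this $P'$ is the first candidate for $j$. This candidate succeeds provided the other chain inequality meeting $P'$ at $v$ is strict; if it is tight instead, I plan to propagate one link further along the chain by combining the two tight inequalities with $\deg u\leq\deg v$ to produce an estimate of the form
\[
\bigl(\alpha_{Q_1}(v)+\alpha_{Q_2}(v)\bigr)-\bigl(\alpha_{Q_1}(u)+\alpha_{Q_2}(u)\bigr)\geq\alpha_{P_i}(u)-\alpha_{P_i}(v)\geq 1
\]
for an appropriate pair of ``far'' parts $Q_1,Q_2$ (for the case $i\in B_1$, $\{Q_1,Q_2\}=\{B_{34},B_4\}$). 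This estimate pins down a $j$ in $B_{34}$ or $B_4$ simultaneously having the required strict degree inequality and keeping all modified chain inequalities intact.

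The step I expect to be the main obstacle is ruling out the doubly-tight configurations at $v$, in which two consecutive chain inequalities are simultaneously saturated, since then the one-step propagation above can still fail on the second modified inequality. My plan is to dispose of these by a direct degree count: simultaneous tightness of, say, $\alpha_{12}(v)+\alpha_{23}(v)=a$ and $\alpha_{23}(v)+\alpha_{34}(v)=a$ pins down $v$ to a one-parameter family of the form $(a-m,m,a-m,m,a-m)$ with $0<m<a$, giving $\deg v=3a-m$; the saturation conditions at $u$ together with $\alpha_1(v)<\alpha_1(u)$ then force $\alpha_{23}(u)\geq a-m+1$ and $\alpha_{34}(u)+\alpha_4(u)\geq a$, whence $\deg u\geq a+(a-m+1)+a=3a-m+1>\deg v$, a contradiction. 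Executing this degree count in each of the three cases $i\in B_1,B_{12},B_{23}$, in parallel with the telescoping propagation borrowed from Theorem~\ref{thm:intersection}, should complete the verification of the non-pure dual exchange property.
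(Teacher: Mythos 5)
Your decomposition into five blocks $B_1,B_{12},B_{23},B_{34},B_4$ coincides with the paper's $J''_1,J'_1,J'_2,J'_3,J''_4$, and the membership criterion by chain inequalities is correct, so the setup is on the right track. The difficulty is in how you plan to execute the exchange step.

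First, a structural point you undersell: for \emph{every} $w\in G(I)$ the two end constraints are forced to be equalities, $\alpha_1(w)+\alpha_{12}(w)=a=\alpha_{34}(w)+\alpha_4(w)$, with no hypothesis $\alpha_1>0$ or $\alpha_4>0$ needed. (If $\alpha_{12}>a$ one can shrink $w_{12}$ to degree $a$; if $\alpha_{12}\le a$ and $\alpha_1+\alpha_{12}>a$ one can shrink $w_1$.) This is more than a bookkeeping improvement: it gives $\deg w=2a+\alpha_{23}(w)$ for all $w\in G(I)$, so $\deg u\le\deg v$ translates into the single inequality $\alpha_{23}(u)\le\alpha_{23}(v)$, which is the engine that makes all three cases close cleanly. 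Your conditional version of the saturation conditions obscures this and causes the trouble below.

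Second, the ``propagate one link further / far parts'' estimate is wrong. In the case $i\in B_1$ you propose, when $\alpha_{12}(v)+\alpha_{23}(v)$ is tight, to derive
\[
\bigl(\alpha_{34}(v)+\alpha_4(v)\bigr)-\bigl(\alpha_{34}(u)+\alpha_4(u)\bigr)\ \ge\ 1,
\]
and then pick $j$ in $B_{34}$ or $B_4$. But $\alpha_{34}+\alpha_4\equiv a$ on $G(I)$, so the left-hand side is identically $0$; the estimate cannot hold. Worse, even if some such $j$ existed, $x_i(v/x_j)$ with $i\in B_1$ and $j\in B_{34}\cup B_4$ necessarily drops $\alpha_{34}(v)+\alpha_4(v)$ from $a$ to $a-1$, violating the last chain inequality. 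So the ``far part'' exchange can never produce an element of $I$. In fact the case you are trying to rescue is vacuous: once you know $\alpha_{23}(u)\le\alpha_{23}(v)$ and have found $\alpha_{12}(v)>\alpha_{12}(u)$ (from the tightness $\alpha_1+\alpha_{12}=a$ at both $u,v$ and $\alpha_1(v)<\alpha_1(u)$), the middle inequality at $v$ is \emph{automatically strict}:
\[
\alpha_{12}(v)+\alpha_{23}(v)\ \ge\ \alpha_{12}(u)+1+\alpha_{23}(u)\ \ge\ a+1\ >\ a.
\]
So the exchange $j\in B_{12}$ always succeeds for $i\in B_1$; no propagation, no doubly-tight degree count, and no case distinction on tightness of the middle constraint are needed. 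This is the one missing idea that turns your plan into the paper's proof; the rest (case $i\in B_{23}$ handled directly from $\alpha_{23}(u)\le\alpha_{23}(v)$ by pigeonhole inside $B_{23}$, case $i\in B_{12}$ handled by an exchange from $B_1$, and the $J_1\leftrightarrow J_4$, $J_2\leftrightarrow J_3$ symmetry) goes through as you outline.
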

\begin{proof}
    First, we introduce some notations as follows:
    \begin{enumerate}
        \item[(i)] for each $t=1, 2,3$, set $J'_{t}:= J_{t}\cap J_{t+1}$,
        \item[(ii)]  $J''_1 := J_1\setminus J_{2}$, and  $J''_4 := J_4\setminus J_{3}$.
    \end{enumerate}

    With above notations, we have $J_1= J'_1\sqcup J''_1$,  $J_2=J'_{1}
    \sqcup J'_2$,  $J_3=J'_{2} \sqcup J'_3$, and $J_4=J'_{3} \sqcup
    J''_4$. For any variable $x_i$, the index $i$ belongs to a unique
    set among $J'_1, J'_2, J'_3, J''_1, J''_4$. Given any monomial $w \in S$,
    we write $w=w'_1w'_2w'_{3}w''_1 w''_{4}h$ such that $\supp(w'_t)
    \subseteq J'_t$ for $t=1, 2 ,3$ and $\supp(w''_t) \subseteq J''_t$
    for $t=1,  4$, and $h$ is any monomial with support in $[n]\setminus
    (J_1\cup \cdots \cup J_4)$. Then $w \in I $ if and only if
    \begin{equation}\label{eq:inI1}
        \begin{split}
            a&\leq \deg(w'_1)+\deg(w''_1),\\
            a&\leq  \deg(w'_{1})+\deg(w'_2),\\
            a&\leq  \deg(w'_{2})+\deg(w'_3),\\
            a&\leq \deg(w'_{3})+\deg(w''_4).
        \end{split}
    \end{equation}
    Moreover, $w \in G(I)$ if and only if $h=1$ and
    \begin{equation}\label{eq:inI2}
        \begin{split}
            a&=\deg(w'_1)+\deg(w''_1)=\deg(w'_{3})+\deg(w''_4),\\
            a&\leq  \deg(w'_{1})+\deg(w'_2),\\
            a&\leq  \deg(w'_{2})+\deg(w'_3).
        \end{split}
    \end{equation}

To see why above statement holds, suppose that
$a<\deg(w'_1)+\deg(w''_1)$. We first observe that  $\deg(w'_1)\leq
a$. Indeed, if  $\deg(w'_1)> a$, then by setting $\tilde{w}'_1$ to
be any monomial that divides $w'_1$ with $a=\deg(\tilde{w}'_1)$, we
obtain $\tilde{w}=\tilde{w}'_1w'_2w'_{3}w''_1 w''_{4} \in I$ because
it satisfies (\ref{eq:inI1}). Also, $\tilde{w}$ strictly divides
$w$, a contradiction to $w \in G(I)$. Therefore, $\deg(w'_1)\leq a$
holds. Then $a<\deg(w'_1)+\deg(w''_1)$ gives $0\leq a-\deg(w'_1)<
\deg(w''_1) $. Set $\tilde{w}''_1$ to be any monomial that divides
$w''_1$ with $a-\deg(w'_1)=\deg(\tilde{w}''_1)$, and take
$\tilde{w}=w'_1w'_2w'_{3}\tilde{w}''_1w''_{4}$. Then $\tilde{w}$
satisfies (\ref{eq:inI1}) and strictly divides $w$, a contradiction to
$w \in G(I)$. Therefore, we have  $a=\deg(w'_1)+\deg(w''_1)$ as
claimed in (\ref{eq:inI2}). Arguing in a similar way we can show
that $a=\deg(w'_{3})+\deg(w''_4)$. Moreover, the inequalities in
(\ref{eq:inI2}) are due to (\ref{eq:inI1}).

Using first equality in (\ref{eq:inI2}), we obtain
\begin{equation}\label{deg1}
        \deg(w)=\deg(w'_1)+\deg(w'_2)+\deg(w'_3)+\deg(w''_1)+\deg(w''_4)=2a+\deg(w'_2).
\end{equation}
Let $u=u'_1 u'_2u'_3u''_1u''_4$, and $v=v'_1v'_2v'_3v''_1v''_4$ such
that $\deg(u)\leq \deg(v)$. Let $i$ be such that
$\deg_{x_i}(u)>\deg_{x_i}(v)$. To show $I$ has the non-pure dual
exchange property, we need to find a suitable $j$ such that
$\deg_{x_j}(u)<\deg_{x_j}(v)$ and $x_i(v/x_j) \in I$. We consider
the following cases:

Case 1: Let  $i \in J'_t$ where $t\in \{1,3\}$ . First let $t=1$.
The discussion is same when $t=3$. If there exists some $j  \in
J'_1$ for which $\deg_{x_j}(u)<\deg_{x_j}(v)$, then $x_i(v/x_j) \in
I$ because  $x_i(v/x_j)$ satisfies the inequalities in
(\ref{eq:inI1}). Otherwise, we have $\deg(v'_1)<\deg(u'_1)$. Since
$\deg(u'_1)+\deg(u''_1)=a= \deg(v'_1)+\deg(v''_1)$, we obtain
$\deg(v''_1)>\deg(u''_1)$, and there exists some $j  \in J''_1$ for
which $\deg_{x_j}(u)<\deg_{x_j}(v)$.  Then $x_i(v/x_j) \in I$
because  $x_i(v/x_j)$ satisfies the inequalities in (\ref{eq:inI1}).

Case 2: Let $i \in J'_2$. Since $\deg(u)\leq \deg(v)$, we obtain
$\deg(u'_2)\leq \deg(v'_2)$ because of (\ref{deg1}). This gives some
$j\in J'_2$ for which $\deg_{x_j}(u)<\deg_{x_j}(v)$. Then
$x_i(v/x_j) \in I$ because  $x_i(v/x_j)$ satisfies the inequalities
in (\ref{eq:inI1}).

Case 3: Let  $i \in J''_t$ where $t\in \{1,4\}$ . First let $t=1$.
The discussion is same when $t=4$. If there exists some $j  \in
J''_1$ for which $\deg_{x_j}(u)<\deg_{x_j}(v)$, then $x_i(v/x_j) \in
I$ because  $x_i(v/x_j)$ satisfies the inequalities in
(\ref{eq:inI1}). Otherwise, we have $\deg(v''_1)<\deg(u''_1)$. Since
$\deg(u'_1)+\deg(u''_1)=a= \deg(v'_1)+\deg(v''_1)$, we obtain
$\deg(u'_1)<\deg(v'_1)$, and there exists some $j  \in J'_1$ for
which $\deg_{x_j}(u)<\deg_{x_j}(v)$.  Let  $w=x_i(v/x_j)$. To show
$w\in I$, it is enough to show that it satisfies  the second
inequality in (\ref{eq:inI1}) because $w$ satisfies the other
inequalities in (\ref{eq:inI1}). Since $\deg(u)\leq \deg(v)$, we
obtain $\deg(u'_2)\leq \deg(v'_2)$ because of (\ref{deg1}). Using
$\deg(u'_1)< \deg(v'_1)$ gives
    \[
    a\leq  \deg(u'_{1})+\deg(u'_2) < \deg(v'_{1})+\deg(v'_2).
    \]
Therefore, $    a< \deg(v'_{1})+\deg(v'_2)$. Since
$\deg(w'_{1})=\deg(v'_{1})-1$ and $\deg(w'_{2})=\deg(v'_{2})$, we
obtain $   a\leq  \deg(w'_1)+\deg(w'_2)$, as required.
\end{proof}
One can modify the proof of above theorem to obtain the following result that we state as corollary of above theorem. 
\begin{Corollary}
	Let $J_1, J_2, J_3 \subset [n]$ such that $J_1 \cap J_3 =  \emptyset$, and $J_2 \subseteq J_1 \cup J_3$, then for any positive integer $a$, the ideal $I=\mm_{J_1}^a \cap  \mm_{J_2}^a \cap \mm_{J_3}^a$ satisfies the non-pure dual exchange property.
\end{Corollary}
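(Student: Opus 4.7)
The plan is to mirror the proof of Theorem~\ref{thm:4ideals} with a reduced partition of the vertex set, and to exploit the fact that here all minimal generators will turn out to have the same degree, so the non-pure dual exchange property reduces to showing the classical exchange property.

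First I would introduce the partition $J'_1 := J_1\cap J_2$, $J'_2 := J_2\cap J_3$, $J''_1 := J_1\setminus J_2$, $J''_3 := J_3\setminus J_2$. Using the hypotheses $J_1\cap J_3=\emptyset$ and $J_2\subseteq J_1\cup J_3$, these four sets are pairwise disjoint and satisfy $J_1 = J'_1\sqcup J''_1$, $J_2 = J'_1\sqcup J'_2$, and $J_3 = J'_2\sqcup J''_3$. Then any monomial $w\in S$ decomposes uniquely as $w=w'_1w'_2w''_1w''_3 h$ with $\supp(w'_t)\subseteq J'_t$, $\supp(w''_t)\subseteq J''_t$, and $\supp(h)\cap(J_1\cup J_2\cup J_3)=\emptyset$. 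Membership $w\in I$ is equivalent to the three inequalities $a\leq \deg(w'_1)+\deg(w''_1)$, $a\leq \deg(w'_1)+\deg(w'_2)$, and $a\leq \deg(w'_2)+\deg(w''_3)$.

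Next, exactly as in the proof of Theorem~\ref{thm:4ideals}, the minimality of generators forces $h=1$ together with the equalities $a=\deg(w'_1)+\deg(w''_1)$ and $a=\deg(w'_2)+\deg(w''_3)$; the argument is the same ``trim the excess from $w''_1$ if possible, else trim from $w'_1$'' reasoning applied to the first and third constraints. Summing gives $\deg(w)=2a$ for every $w\in G(I)$. Consequently $I$ is generated in a single degree, and the condition $\deg(u)\leq\deg(v)$ in Definition~\ref{def:dual} is automatic, so it suffices to prove the following: for any $u,v\in G(I)$ and any $i$ with $\deg_{x_i}(v)<\deg_{x_i}(u)$, there exists $j$ with $\deg_{x_j}(v)>\deg_{x_j}(u)$ and $x_i(v/x_j)\in I$.

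Finally I would do the case split on which part of the partition contains $i$, reusing verbatim the case-work from Theorem~\ref{thm:4ideals}. If $i\in J'_1$, seek $j\in J'_1$; if none exists, the equality $\deg(v'_1)+\deg(v''_1)=a=\deg(u'_1)+\deg(u''_1)$ forces $\deg(v''_1)>\deg(u''_1)$, and a $j\in J''_1$ works (the three defining inequalities of $I$ are clearly preserved). The case $i\in J'_2$ is symmetric, swapping the roles of $J_1$ and $J_3$. If $i\in J''_1$, try $j\in J''_1$; otherwise the same minimal-generator equality pushes the excess into $J'_1$, producing $j\in J'_1$, and one only has to verify that the middle inequality $a\leq \deg(w'_1)+\deg(w'_2)$ still holds after the swap; this follows because the swap increases $\deg(w'_1)$ by one while $\deg(w'_2)$ is unchanged, and the original inequality held for $v$. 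The case $i\in J''_3$ is symmetric to $i\in J''_1$. There is no analogue of Case~2 in Theorem~\ref{thm:4ideals} because the present setup has no ``middle'' intersection playing the role of $J_2\cap J_3$ in the four-edge configuration.

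The only potentially delicate step is verifying the middle inequality when $i\in J''_1$ or $i\in J''_3$, but here the common degree $2a$ of all generators, rather than the inequality $\deg(u)\leq\deg(v)$ used in Theorem~\ref{thm:4ideals}, makes the bookkeeping strictly easier; so I do not anticipate any real obstacle beyond transcription of the four-edge argument.
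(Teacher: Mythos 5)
Your reduction is set up correctly and does match what a direct modification of Theorem~\ref{thm:4ideals} gives: the partition $J'_1,J'_2,J''_1,J''_3$, the three membership inequalities, the equalities $a=\deg(w'_1)+\deg(w''_1)=\deg(w'_2)+\deg(w''_3)$ for minimal generators, and the consequence $\deg(w)=2a$ for every $w\in G(I)$. The gap is in the case $i\in J''_1$ (symmetrically $i\in J''_3$) when the exchange index $j$ has to be taken in $J'_1$. You assert that the swap ``increases $\deg(w'_1)$ by one'', but it does the opposite: $w=x_i(v/x_j)$ with $i\in J''_1$ and $j\in J'_1$ has $\deg(w'_1)=\deg(v'_1)-1$, so the middle inequality $a\le\deg(w'_1)+\deg(w'_2)$ is exactly the one at risk. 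In Theorem~\ref{thm:4ideals} this step is rescued by the degree formula $\deg(w)=2a+\deg(w'_2)$, which together with $\deg(u)\le\deg(v)$ gives $\deg(u'_2)\le\deg(v'_2)$; combined with $\deg(u'_1)<\deg(v'_1)$ and $a\le\deg(u'_1)+\deg(u'_2)$ this yields the strict bound $a<\deg(v'_1)+\deg(v'_2)$ needed to absorb the loss of one unit. In your setting the degree formula degenerates to $\deg(w)=2a$, so it carries no comparison between $\deg(u'_2)$ and $\deg(v'_2)$, and the required strictness is unavailable. So the step you flagged as ``strictly easier'' is in fact the one that breaks.

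Moreover this is not a repairable oversight, because the statement itself fails as written. Take $J_1=\{1,2\}$, $J_2=\{2,3\}$, $J_3=\{3,4\}$ (so $J_1\cap J_3=\emptyset$ and $J_2\subseteq J_1\cup J_3$) and $a=1$. Then $I=(x_1,x_2)\cap(x_2,x_3)\cap(x_3,x_4)=(x_2x_3,\,x_2x_4,\,x_1x_3)$. For $u=x_1x_3$, $v=x_2x_4$ and $i=1$ we have $\deg(u)=\deg(v)$ and $\deg_{x_1}(v)<\deg_{x_1}(u)$; the only admissible indices are $j=2$ and $j=4$, and neither $x_1(v/x_2)=x_1x_4$ nor $x_1(v/x_4)=x_1x_2$ lies in $I$ (the first violates the constraint coming from $J_2$, the second the one coming from $J_3$). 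This is precisely the configuration your case $i\in J''_1$ must handle, and it has no solution. An additional hypothesis such as $J_3\subseteq J_2$ (the degenerate form of the condition $J_3\subseteq J_2\cup J_4$ in Theorem~\ref{thm:4ideals}) would force $\deg(w'_2)\ge a$ for all generators, make the middle inequality automatic, and let your argument go through; without something of that kind the corollary, and hence any proof of it, cannot stand.
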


Note that in Example~\ref{exp:3ideals} the ideal $I$ failed to have the non-pure dual exchange property because $J_2 \not\subseteq J_1\cup J_3$. In the following remark, we justify the conditions on $J_i$'s  given in Theorem~\ref{thm:4ideals}.
\begin{Remark}{\em
		\begin{enumerate}
			\item The ideals described in Theorem~\ref{thm:4ideals} need not to have
    the non-pure dual exchange property if any of the equality in the statement (2) of  Theorem~\ref{thm:4ideals} is
    violated. For example $\mm_{J_1}=(x_1 , x_2)$,
    $\mm_{J_2}=(x_2,x_3)$, $\mm_{J_3}=(x_3 , x_4,x_5)$, $\mm_{J_4}=(x_5,
    x_6)$, and $I=\mm_{J_1}^2\cap \mm_{J_2}^2\cap \mm_{J_3}^2\cap
    \mm_{J_4}^2$. Let $ u=x_1x_2x_3x_5^2$ and $v=x_2^2x_4^2x_6^2$. Then
    $u,v \in G(I)$, $\deg(u)<\deg(v)$ and $\deg_{x_1}(v)<\deg_{x_1}(u)$
    but $x_1x_2^2x_4^2x_6^2 / x_i$ for any $i=2,4,6$ is not in $I$.
    
\item 
    The ideals described in Theorem~\ref{thm:4ideals} need not to have the
    non-pure dual exchange property if we consider $I=
    \mm^{a_1}_{J_1}\cap  \mm^{a_2}_{J_2} \cap  \mm^{a_3}_{J_3}\cap
    \mm^{a_4}_{J_4} $ with $a_i\neq a_j$ for some $i$ and $j$. For
    example $\mm_{J_1}=(x_1 , x_2)$, $\mm_{J_2}=(x_2,x_3)$,
    $\mm_{J_3}=(x_3 , x_4)$, $\mm_{J_4}=(x_4, x_5)$, and $I=
    \mm^{2}_{J_1}\cap  \mm^{2}_{J_2} \cap  \mm^{2}_{J_3}\cap \mm_{J_4}
    $. Then $u=x_1x_2x_3x_4$, $v=x_2^2x_4^2$ are in $G(I)$ and
    $\deg_{x_1}(v)<\deg_{x_1}(u)$, but $x_1x_2^2x_4, x_1x_2x_4^2 \not\in
    I$. Therefore, $I$ does not have the non-pure dual exchange property.

 \item    The ideals described in Theorem~\ref{thm:4ideals} need not to have
    the non-pure dual exchange property if we extend the construction to hypergraphs with five edges. For example $\mm_{J_1}=(x_1 , x_2)$,
    $\mm_{J_2}=(x_2,x_3)$, $\mm_{J_3}=(x_3 , x_4)$, $\mm_{J_4}=(x_4,
    x_5)$, $\mm_{J_5}=(x_5, x_6)$ and $I= \mm_{J_1}\cap  \mm_{J_2} \cap
    \mm_{J_3}\cap \mm_{J_4} \cap \mm_{J_5}$. Then $u=x_1x_3x_5$,
    $v=x_2x_4x_5$ are in $G(I)$ and $\deg_{x_1}(v)<\deg_{x_1}(u)$, but
    $x_1x_2x_5, x_1x_4x_5 \not\in I$. Therefore, $I$ does not have
    the non-pure dual exchange property.
        	\end{enumerate}
}
\end{Remark}


\begin{thebibliography}{99}
	

\bibitem{BH}  S. Bandari, J. Herzog, {\it Monomial localizations and polymatroidal ideals}, European Journal of Combinatorics, {\bf 34}(4) (2013), 752--763.

\bibitem{BQ} S. Bandari, A. A. Qureshi, {\it Ideals with linear quotients and componentwise polymatroidal ideals}, Mediterranean Journal of
Mathematics, {\bf 20}(2023), 53. 

	\bibitem{B}  C. Berge, Hypergraphs: Combinatorics of Finite Sets, Mathematical Library {\bf 45}, North-Holland, 1989.

\bibitem{CH}  A. Conca, J. Herzog,  {\it Castelnuovo-Mumford regularity of products of ideals}, Collectanea Mathematica, \textbf{54}(2) (2003), 137--152.

\bibitem{Far1} S. Faridi, {\it The facet ideal of a simplicial complex}, Manuscripta Mathematica, {\bf 109}(2) (2002), 159--174.

\bibitem{Far2} S. Faridi, {\it Simplicial trees are sequentially Cohen-Macaulay}, Journal of Pure Applied Algebra, {\bf 190}(13) (2004), 121--136.

\bibitem{F} A. Ficarra, {\it Shellability of componentwise discrete polymatroids}, arXiv:2312.13006.

\bibitem{FT} C. A. Francisco, A. Van Tuyl, {\it Some families of componentwise linear monomial ideals}, Nagoya Mathematical Journal, \textbf{187} (2007), 115--156.

\bibitem{HH1} J. Herzog, T. Hibi, {\it Componentwise linear ideals}, Nagoya Mathematical Journal, \textbf{153} (1999), 141--153.

\bibitem{HHdis}  J.~Herzog, T.~Hibi, \emph{Discrete polymatroids},  Journal of Algebraic Combinatorics, {\bf 16} (2002), 239--268.

\bibitem{HHT}  J.~Herzog, T.~Hibi, N. V. Trung, {\it Vertex cover algebras of unimodular hypergraphs}, Proceedings of  American Mathematical Society, \textbf{137} (2009), 409--414.

\bibitem{HT} J. Herzog, Y. Takayama, {\it Resolutions by mapping cones},
Homology Homotopy and Applications, \textbf{4}(2) (2002), 277--294.

\bibitem{KH} M. Kokubo, T. Hibi, {\it Weakly polymatroidal ideals}, Algebra Colloquium,  \textbf{13} (2006), 711--720.

\bibitem{MM} F. Mohammadi, S. Moradi, {\it Weakly polymatroidal ideals
with applications to vertex cover ideals}, Osaka Journal of Mathematics, \textbf{47}(3)  (2010), 627--636.

\bibitem{JZ} A. Soleyman Jahan, X. Zheng, {\it Ideals with linear quotients},
Journal of Combinatorial Theory, Series A, \textbf{117}(1) (2010), 104--110.
\end{thebibliography}
\end{document}